\newtheorem{theorem}{Theorem}
\newtheorem{lemma}{Lemma}
\newtheorem{corollary}{Corollary}
\newtheorem{definition}{Definition}
\newtheorem{proposition}{Proposition}
\newcommand{\R}{\mathbb{R}}
\begin{document}
\title{Support identification for parameter variations in a PDE system via regularized methods}
\author[1]{Houcine  Meftahi \thanks{houcine.meftahi@enit.utm.tn}}
\author[2]{Chayma Nssibi \thanks{chayma.nssibi@enit.utm.tn}}
\affil[1]{University of Jendouba/ISIKEF and  ENIT of Tunis, Tunisia.}
\affil[2]{University of Tunis Elmanar, ENIT-LAMSIN, B.P. 37, 1002 Tunis}
\maketitle

\begin{abstract}
We study the inverse problem of recovering the spatial support of parameter variations in a system of partial differential equations (PDEs) from boundary measurements. A reconstruction method is developed based on the monotonicity properties of the Neumann-to-Dirichlet operator, which provides a theoretical foundation for stable support identification. To improve reconstruction accuracy, particularly when parameters have disjoint  supports, we propose a combined regularization approach integrating monotonicity principles with Truncated Singular Value Decomposition (TSVD) regularization. This hybrid strategy enhances robustness against noise and ensures sharper support localization. Numerical experiments demonstrate the effectiveness of the proposed method, confirming its applicability in practical scenarios with varying parameter configurations.
\end{abstract}

\section{Introduction}
In this article, we investigate an inverse problem involving the reconstruction of the support of parameter variations for the following system of partial differential equations:
\begin{equation}\label{system}
- \nabla \cdot \left( \lambda (\nabla \cdot {u}) {I} + 2\mu \nabla^s {u} \right) + \rho {u} = \mathbf{0} \quad \text{in } \Omega,
\end{equation}
where $\lambda$ and $\mu$ are the Lamé parameters, characterizing the elastic properties of the material, $\rho$ denotes the  density, ${u}$ represents the displacement field, $\nabla^s {u}$ is the symmetric gradient (strain tensor), and $\Omega$ is the spatial domain of interest.

This PDE system can be viewed  as an approximate  linear model   in biological tissues, where the term $\rho {u}$ corresponds to a restoring force arising from tissue elasticity. Such a  model  is considered in \cite{lin2006quantitative,meftahi2025stability,yu2011strong}.

The study of this inverse problem is highly relevant in medical diagnostics, particularly for detecting anomalies in biological tissues. Variations in the Lamé parameters $\lambda$ and $\mu$ as well as the density $\rho$ can indicate pathological changes, such as the presence of tumors or cancerous lesions. By reconstructing the support of these parameter variations, our approach provides a non-invasive imaging tool that could enhance early disease detection and treatment planning.
  
The inverse coefficients problem for the given PDE system is investigated in \cite{meftahi2025stability}, where the authors establish both non-constructive and constructive Lipschitz stability results. Their analysis leverages advanced techniques, including monotonicity methods and the theory of localized potentials, to derive rigorous stability estimates.

Our primary contribution is numerical. We introduce a regularized reconstruction framework that effectively combines the monotonicity method with the Truncated Singular Value Decomposition (TSVD) approach. Unlike the results presented in \cite{eberle2022monotonicity}, where parameter variations are restricted to identical supports, in this work we treat the general case, and we show that our method can also recover disjoint supports of the parameter perturbations accurately. Furthermore, we establish that the proposed reconstruction remains stable even under significant noise levels, showcasing its robustness in practical applications.

We first address the case where the variations of physical parameters share the same support, with the goal of recovering the shape of this support from boundary measurements, assuming that the parameter values inside and outside the shape are known a priori. Secondly, we consider the case where the parameter variations have disjoint supports.

The first case  is regarded as an anomaly detection inside the medium. In this context, various reconstruction methods have been explored by researchers, employing techniques such as iterative algorithms based on topological and shape gradient approaches. Notable works in this area include \cite{belhachmi2018topology,belhachmi2023level,  belhachmi2013shape,chaabane2013topological, laurain2016shape,meftahi2009etudes}, among others. Additionally, the monotonicity method has proven effective in anomaly detection across diverse settings for problems involving one and two parameters.  

In this work, we demonstrate that the monotonicity method remains effective for inverse problems involving three physical parameters, and that stable numerical reconstruction is achievable. In the case of a single support, we employ a  reconstruction approach based on the linearized monotonicity test. However, this method is sensitive to high levels of noise in the data. To improve robustness, we propose a hybrid strategy that combines the linearized regularized monotonicity method with the Truncated Singular Value Decomposition (TSVD) approach. In contrast to the former method, this combined approach exhibits both accuracy and stability under significant noise and is capable of handling disjoint supports. 
 
The monotonicity method was first introduced in \cite{tamburrino2002new} to solve the electrical resistance tomography problem. By exploiting the monotonicity properties of the resistance matrix, this approach provides a non-iterative reconstruction framework. Since then, the method has been generalized to static imaging modalities, including Electrical Capacitance Tomography, Inductance Tomography, and Electrical Resistance Tomography \cite{calvano2012fast, garde2022simplified, garde2022reconstruction}. It has also been extended to time-dependent problems, such as Magnetic Induction Tomography (MIT) \cite{Tamburrino}.  

The monotonicity method has also been applied to problems in linear elasticity, where both standard and linearized formulations have been investigated \cite{eberle2021shape,eberle2023resolution}. In addition, monotonicity-based techniques have been employed for shape reconstruction in electrical impedance tomography \cite{harrach2013monotonicity}. Recent research further underscores the versatility and effectiveness of the monotonicity method in addressing a wide range of inverse problems \cite{esposito2024piecewise, griesmaier2022inverse, klibanov2025convexification, lin2022monotonicity, mottola2024imaging}.

The paper is structured as follows. Section 2 introduces the direct problem, establishes the Fréchet differentiability of the Neumann-to-Dirichlet operator, and formulates the corresponding inverse problem. Section 3 analyzes the linearized monotonicity property of the Neumann-to-Dirichlet operator and demonstrates its application to shape reconstruction, supported by numerical results. Section 4 reformulates the inverse problem as the minimization of a residual functional constrained by the monotonicity method, with further numerical illustrations. Section 5 combines the monotonicity method with the truncated singular value decomposition (TSVD) approach and presents additional numerical results.

\section{Problem Formulation}
In this section, we begin by presenting the problems of interest: the direct problem and the inverse problem.

Let $\Omega \subset \mathbb{R}^d$ ($d = 2$ or $3$) be a bounded and connected open set with Lipschitz boundary $\partial \Omega = \Gamma_D \cup \Gamma_N$, where $\Gamma_D$ and $\Gamma_N$ represent the Dirichlet and Neumann boundaries, respectively, satisfying $\Gamma_D \cap \Gamma_N = \emptyset$. We assume that both $\Gamma_D$ and $\Gamma_N$ are relatively open and connected.

For the following definitions, we introduce the set:
\begin{equation*}
    L_+^{\infty}(\Omega) := \{ w \in L^{\infty}(\Omega) : \operatorname{essinf}_{x \in \Omega} w(x) > 0 \}.
\end{equation*}

Let $u : \Omega \to \mathbb{R}^d$ be the displacement vector, and let the Lam\'e parameters and the density  be given by $\mu, \lambda, \rho : \Omega \to L_+^{\infty}(\Omega)$. The symmetric gradient is defined as:
\begin{equation*}
    {\nabla}^s u = \frac{1}{2} (\nabla u + (\nabla u)^T).
\end{equation*}
We denote $\nu$  the outward-pointing normal vector on $\partial \Omega$, $g \in L^2(\Gamma_N)^d$ represents the boundary force, and $I$ is the $d \times d$ identity matrix.

The divergence of a matrix $M \in \mathbb{R}^{d \times d}$ is defined as:
\begin{equation*}
    \nabla \cdot M = \sum_{i,j=1}^{d} \frac{\partial M_{ij}}{\partial x_j} e_i,
\end{equation*}

where $e_i$ is a unit vector and $x_j$ is a component of a vector in $\mathbb{R}^d$.

The direct problem in linear elasticity requires finding $u \in H^1(\Omega)^d$ such that
\begin{equation}\label{direct2}
\left\{
\begin{aligned}
   - \nabla \cdot \left( \lambda (\nabla \cdot u) I + 2\mu {\nabla}^s u \right) +\rho u& = 0 \quad \text{in } \Omega.\\
     (\lambda (\nabla \cdot u) I + 2\mu {\nabla}^s u) \nu& = g \quad \text{on } \Gamma_N, \\
    u& = 0 \quad \text{on } \Gamma_D.
\end{aligned}\right.
\end{equation}
From a physical point of view, this means that we consider an elastic test body that is fixed (zero displacement) at $\Gamma_D$ (Dirichlet condition) and subjected to a force $g$ on $\Gamma_N$ (Neumann condition). This results in the displacement $u$, which is measured on the boundary $\Gamma_N$.

The  weak formulation of the boundary value problem \eqref{direct2} reads:
\begin{equation}
    \int_\Omega \lambda \nabla \cdot u \nabla \cdot v +2\mu {\nabla}^s u : {\nabla}^s v  +\rho u\,dx = \int_{\Gamma_N} g \cdot v \, ds, \quad \forall v \in \mathcal V,
\end{equation}
where the function space is defined as:
\begin{equation*}
  \mathcal  V := \{ v \in H^1(\Omega)^d : v|_{\Gamma_D} = 0 \}.
\end{equation*}

We note that for $\lambda, \mu, \rho \in L_+^{\infty}(\Omega)$, the existence and uniqueness of a solution to the variational formulation (4) can be established using the Lax-Milgram theorem (see, e.g., [4]).

Measuring the boundary displacements resulting from applied forces on $\Gamma_N$ can be modeled by the Neumann-to-Dirichlet operator $\Lambda(\lambda, \mu,\rho)$, defined as:
\begin{equation*}
    \Lambda(\lambda, \mu,\rho) : L^2(\Gamma_N)^d \to L^2(\Gamma_N)^d, \quad g \mapsto u|_{\Gamma_N},
\end{equation*}
where $u \in \mathcal V$ solves \eqref{direct2}.

This operator is self-adjoint, compact, and linear. Its associated bilinear form is given by:
\begin{equation}
    \langle g, \Lambda(\lambda, \mu,\rho) h \rangle = \int_\Omega \lambda \nabla \cdot u^g \nabla \cdot u^h+2\mu {\nabla}^s u^g : {\nabla}^s u^h + \rho u^g\cdot u^h  \,dx,
\end{equation}
where $u^g$ solves problem \eqref{direct2} and $u^h$ solves the corresponding problem with boundary force $h \in L^2(\Gamma_N)^d$.
Next, we investigate the  differentiability of the Neumann-to-Dirichlet operator.
\begin{proposition}\label{prop_diff}
The Neumann-to-Dirichlet operator 
\[
\Lambda:  L^\infty_+(\Omega) \times L^\infty_+(\Omega) \times L^\infty_+(\Omega)  \longrightarrow   \mathcal{L}( L^2(\Gamma_{\textup N})^d),
\]
 is Fréchet  differentiable and its derivative in the direction   $ \tilde{\lambda}, \tilde{\mu},\tilde{\rho}$ 

is given by 
\begin{equation}
\label{8}
 \langle {\Lambda}'(\lambda, \mu, \rho) ( \tilde{\lambda}, \tilde{\mu},\tilde{\rho}) g , g \rangle = -\int_\Omega \Big( \tilde{\lambda} \  \nabla \cdot u^g \nabla \cdot u^g + 2 \ \tilde{\mu} \  \nabla^s u^g : \nabla^s u^g + \tilde{\rho} \  u^g \cdot u^g \Big) \ dx,
\end{equation}
where  $ u^g=u^g_{\lambda,\mu,\rho}$ is the solution of \eqref{direct2} for the boundary load $g\in  L^2(\Gamma_{\textup N})^d$. 
\end{proposition}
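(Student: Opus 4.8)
The plan is to reduce the differentiability of the operator-valued map $\Lambda$ to the differentiability of the underlying solution map $(\lambda,\mu,\rho)\mapsto u^g_{\lambda,\mu,\rho}$ for a fixed boundary load $g$, and then to exploit the symmetric, affine structure of the weak form to obtain the explicit formula \eqref{8}. First I would write the weak formulation as $a_{\lambda,\mu,\rho}(u,v)=\ell_g(v)$ for all $v\in\mathcal V$, where
\[
a_{\lambda,\mu,\rho}(u,v) = \int_\Omega \lambda\,\nabla\cdot u\,\nabla\cdot v + 2\mu\,\nabla^s u : \nabla^s v + \rho\, u\cdot v\,dx,
\]
and I would record the two structural features that drive the whole argument: the map $(\lambda,\mu,\rho)\mapsto a_{\lambda,\mu,\rho}$ is \emph{linear}, and $a_{\lambda,\mu,\rho}$ is symmetric in $(u,v)$.

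Next I would establish Fréchet differentiability of the solution map via the implicit function theorem. Defining $F(\lambda,\mu,\rho,u)=a_{\lambda,\mu,\rho}(u,\cdot)-\ell_g\in\mathcal V^*$, the form $a_{\lambda,\mu,\rho}$ is uniformly coercive on $\mathcal V$ for parameters in a neighborhood of $(\lambda,\mu,\rho)$ in $(L^\infty_+(\Omega))^3$ — this follows from Korn's inequality together with the positive lower bounds on $\mu$ and $\rho$ and the homogeneous Dirichlet condition on $\Gamma_D$ — so $\partial_u F = a_{\lambda,\mu,\rho}(\cdot,\cdot)$ is boundedly invertible by Lax–Milgram. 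Since $F$ is affine in the parameters and linear in $u$, hence smooth, the implicit function theorem yields a Fréchet-differentiable solution map whose derivative $\dot u$ in the direction $(\tilde\lambda,\tilde\mu,\tilde\rho)$ is characterized by the sensitivity equation
\[
a_{\lambda,\mu,\rho}(\dot u, v) = -\,\tilde a(u^g, v)\qquad\text{for all } v\in\mathcal V,
\]
where $\tilde a$ denotes the form obtained by replacing $(\lambda,\mu,\rho)$ with $(\tilde\lambda,\tilde\mu,\tilde\rho)$.

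I would then differentiate the quadratic form $\langle \Lambda(\lambda,\mu,\rho)g,g\rangle = a_{\lambda,\mu,\rho}(u^g,u^g)$, using that the weak formulation tested against $v=u^g$ gives $\langle \Lambda g,g\rangle=\ell_g(u^g)=a_{\lambda,\mu,\rho}(u^g,u^g)$. Letting $u^g_t$ denote the solution at the perturbed parameters $(\lambda,\mu,\rho)+t(\tilde\lambda,\tilde\mu,\tilde\rho)$, the product rule and the symmetry of $a$ give
\[
\frac{d}{dt}\Big|_{t=0} a_{(\lambda,\mu,\rho)+t(\tilde\lambda,\tilde\mu,\tilde\rho)}(u^g_t,u^g_t) = \tilde a(u^g,u^g) + 2\,a_{\lambda,\mu,\rho}(\dot u, u^g).
\]
The crucial cancellation comes from inserting $v=u^g$ into the sensitivity equation, which gives $a_{\lambda,\mu,\rho}(\dot u, u^g)=-\tilde a(u^g,u^g)$; substituting yields $\langle \Lambda' g,g\rangle=-\tilde a(u^g,u^g)$, precisely the right-hand side of \eqref{8}. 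Finally, since $\Lambda'(\lambda,\mu,\rho)(\tilde\lambda,\tilde\mu,\tilde\rho)$ inherits self-adjointness from the symmetry of $a$, its quadratic form determines the full operator by polarization.

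The main obstacle is the passage from Gâteaux to genuine Fréchet differentiability: one must control the remainder $u^g_t-u^g-t\dot u$ and show it is $o(t)$, and more delicately that the whole increment is $o(\|(\tilde\lambda,\tilde\mu,\tilde\rho)\|_{L^\infty})$ \emph{uniformly} in the direction, together with the operator-norm — rather than merely pointwise-in-$g$ — estimate needed for differentiability in $\mathcal L(L^2(\Gamma_N)^d)$. I expect this to follow from subtracting the weak formulations at the perturbed and base parameters, testing against the difference, and using the uniform coercivity bound to convert the $L^\infty$-smallness of the parameter perturbation into $H^1$-smallness of the solution difference, with the linear dependence on the parameters ensuring that the second-order remainder is quadratic in the perturbation size.
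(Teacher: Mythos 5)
Your argument is correct, but it follows a genuinely different route from the paper. The paper obtains the directional derivative by writing $\langle\Lambda(\lambda,\mu,\rho)g,g\rangle$ as an inf--sup of a Lagrangian and invoking the Correa--Seeger theorem on differentiability of saddle points (Theorem~\ref{CS} in the appendix, with saddle point $(u^g,-2u^g)$); it then proves the Fréchet property by squeezing the remainder $R(g)$ between $0$ and a quadratic expression in $(\tilde\lambda,\tilde\mu,\tilde\rho)$ using the two monotonicity estimates of Lemmas~\ref{mono_loewn} and~\ref{mono_bis}. You instead differentiate the solution map via the implicit function theorem, use the sensitivity equation $a(\dot u,v)=-\tilde a(u^g,v)$ together with the symmetry of $a$ to produce the cancellation $\tilde a(u^g,u^g)+2a(\dot u,u^g)=-\tilde a(u^g,u^g)$, and control the remainder by a direct energy estimate. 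Your route is arguably more self-contained and quantitative: writing $\langle\Lambda_1 g,g\rangle-\langle\Lambda_0 g,g\rangle=-\tilde a(u_0,u_1)$ and hence $R(g)=-\tilde a(u_0,u_1-u_0)$, the coercivity bound $\|u_1-u_0\|_{H^1}\le C\|(\tilde\lambda,\tilde\mu,\tilde\rho)\|_{L^\infty}\|g\|_{L^2}$ gives $|R(g)|=O(\|(\tilde\lambda,\tilde\mu,\tilde\rho)\|_{L^\infty}^2)$ uniformly over $\|g\|=1$, which (by self-adjointness of the remainder operator) is exactly the operator-norm $o(\|\cdot\|)$ estimate required — the same quadratic rate the paper extracts from its monotonicity lemmas, obtained here without them. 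The paper's approach buys consistency with the monotonicity machinery it develops anyway for the inverse problem, and avoids discussing differentiability of the solution map; yours avoids the saddle-point formalism and an appeal to an external abstract theorem. Two small points you should make explicit if you write this up: uniform coercivity needs Korn's second inequality together with the fact that $L^\infty_+(\Omega)$ is open in $L^\infty(\Omega)$ (so the perturbed coefficients stay uniformly bounded below), and the polarization step at the end requires noting that each $\Lambda(\lambda,\mu,\rho)$, hence the candidate derivative, is self-adjoint.
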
 

\begin{proof} 
Introduce the Lagrangian 
\[
\begin{aligned}
L(\lambda,\mu,\rho,\varphi,\psi):= \int_\Omega \lambda(\nabla\cdot \varphi)\cdot(\nabla\cdot \varphi)+2\mu
 \nabla^s\varphi: \nabla^s\varphi +\rho\varphi\cdot\varphi\,dx\\
+\int_\Omega \lambda(\nabla\cdot \varphi)\cdot(\nabla\cdot \psi)+2\mu
 \nabla^s\varphi: \nabla^s\psi +\rho\varphi\cdot\psi\,dx-\int_{\Gamma_N}g\cdot\varphi\,dx.
\end{aligned}
\]
Obviously, we  have 
\[
L(\lambda,\mu,\rho,u^g,v)=  \langle {\Lambda}(\lambda, \mu, \rho)g,g\rangle.
\]
Since
\[
\sup_{\psi \in \mathcal V}L(\lambda,\mu,\rho,\varphi,\psi)= 
\left\{ \begin{array}{ccc}  \langle \Lambda(\lambda, \mu, \rho)g,g\rangle &\text{ if } \varphi=u^g_{\lambda,\mu,\rho},\\  +\infty  &\text{otherwise}, \end{array}\right.
\]
we get 
\[
\langle {\Lambda}(\lambda, \mu, \rho)g,g\rangle = \adjustlimits\inf_{\varphi \in \mathcal V}\sup_{\psi \in \mathcal V)}L(\lambda, \mu, \rho,\varphi,\psi).
\]
The functional $L$ is convex continuous with  respect to $\varphi$ and concave continuous with respect to $\psi$. 
According to Ekeland and Temam \cite{ekeland1974analyse},  the functional $L$ has a saddle point $(\bar u, \bar v)$  if and only if t $(\bar u, \bar v)$ solve the following system
\[
\begin{aligned}
\partial_\psi L(\lambda,\mu,\rho,\bar u,\bar v)\hat\varphi&=0,\\
\partial_\varphi L(\lambda,\mu,\rho,\bar u,\bar v)\hat \psi&=0,
\end{aligned}
\]
for all $\hat \varphi,  \hat \psi \in \mathcal V$. This yields that  $L$ has a saddle point $(\bar u,\bar v)=(u^g, -2u^g)$, where $u^g$ is the solution of \eqref{direct2}.

Let $\lambda_t=\lambda+t\tilde\lambda,  \mu_t=\mu+t\tilde\mu,   \rho_t=\rho+t\tilde\rho$.  Similarly to the above analysis, we  have 
\[
\langle {\Lambda}(\lambda_t, \mu_t, \rho_t)g,g\rangle = \adjustlimits\inf_{\varphi \in \mathcal V}\sup_{\psi \in \mathcal V)}L(\lambda_t, \mu_t, \rho_t,\varphi,\psi).
\]
Under the assumptions of Theorem \ref{CS} (which can be verified following the arguments as in \cite{meftahi2015sensitivity}), we obtain:
\[
\partial_t\langle {\Lambda}(\lambda_t, \mu_t, \rho_t)g,g\rangle\Big\vert_{t=0}=\partial_t L(\lambda_t, \mu_t, \rho_t,u^g,-2u^g)\Big\vert_{t=0}.
\]
This yields that  
\[
 \langle {\Lambda}'(\lambda, \mu, \rho) ( \tilde{\lambda}, \tilde{\mu},\tilde{\rho}) g , g \rangle = -\int_\Omega \Big( \tilde{\lambda} \  \nabla \cdot u^g \nabla \cdot u^g + 2 \ \tilde{\mu} \  \nabla^s u^g : \nabla^s u^g + \tilde{\rho} \  u^g \cdot u^g \Big) \ dx.
\]
 The directional derivative $ (\tilde{\lambda}, \tilde{\mu},\tilde{\rho})\mapsto \langle {\Lambda}'(\lambda, \mu, \rho) ( \tilde{\lambda}, \tilde{\mu},\tilde{\rho})$ is bounded and   linear.

Define the remainder 
\[
R(g):=  \langle {\Lambda}(\lambda+ \tilde{\lambda}, \mu+\tilde{\mu}, \rho+\tilde{\rho}) g , g \rangle
       -\langle {\Lambda}(\lambda, \mu, \rho) g , g \rangle
       - \langle {\Lambda}'(\lambda, \mu, \rho) ( \tilde{\lambda}, \tilde{\mu},\tilde{\rho}) g , g \rangle
\]
To prove the Fréchet differentiabililty, it suffices  to  show that 
\[
\sup_{g\in L^2(\Gamma_N)^d}\frac{\vert R(g) \vert}{ \Vert(\tilde \lambda,\tilde \mu,\tilde\rho)  \Vert}\rightarrow 0,\quad  \text{ as } \Vert(\tilde \lambda,\tilde \mu,\tilde\rho)  \Vert\rightarrow 0,
\]
where
\[
\Vert(\tilde \lambda,\tilde \mu,\tilde\rho)  \Vert:=\max\{\Vert \tilde\lambda \Vert_{L^\infty(\Omega)},
\Vert \tilde\mu \Vert_{L^\infty(\Omega)},\Vert \tilde\rho \Vert_{L^\infty(\Omega)}.
\]
Applying  Lemma~\ref{mono_loewn}  and Lemma~\ref{mono_bis} with $(\lambda_1,\mu_1,\rho_1):=(\lambda+\tilde \lambda,\mu+\tilde \mu, \rho+\tilde \rho)$  and $(\lambda_2,\mu_2,\rho_2):=(\lambda,\mu,\rho)$, 
where $u$ is  the solution  corresponding to  the parameters $(\lambda,\mu,\rho)$
and $\tilde u$ is the solution corresponding to the parameters  $(\lambda+\tilde \lambda,\mu+\tilde \mu,\rho+\tilde \rho)$,
we get that

\begin{equation*}
\begin{aligned}
 R(g)  &\geq \int_\Omega \left( (\lambda- \lambda -\tilde \lambda) \ \Vert \nabla \cdot u \Vert^2 + 2 \ (\mu-\mu- \tilde \mu) \  \Vert \nabla^s u \Vert^2_F  + (\rho-\rho-\tilde\rho) \ \Vert u \Vert^2 \right) \ dx\\
&+\int_\Omega \left(\tilde \lambda \Vert \nabla \cdot u \Vert^2 + 2 \tilde \mu \  \Vert \nabla^s u \Vert^2_F  + \tilde\rho \Vert u \Vert^2 \right) \ dx =0,
\end{aligned}
\end{equation*}
and 
\begin{equation*}
\begin{aligned}
 R(g)  \leq  &  \int_\Omega \Big( -\frac{\lambda \tilde\lambda}{\lambda+\tilde\lambda} \ \Vert \nabla \cdot u \Vert^2 - 2 \ \frac{\mu \tilde\mu}{\mu+\tilde\mu} \  \Vert \nabla^s  u \Vert^2_F - \frac{\rho \tilde\rho}{\rho+\tilde\rho}\ \Vert u^g \Vert^2  \\
& + \tilde\lambda \ \Vert \nabla \cdot  u\Vert^2+ 2 \tilde\mu \ \Vert \nabla^s \ u \Vert^2_F + \tilde\rho \ \Vert u \Vert^2 \Big) \ dx \\
& =  \int_\Omega \Big( \frac{\tilde\lambda^2}{\lambda+\tilde\lambda} \ \Vert \nabla \cdot u \Vert^2 + 2 \ \frac{ \tilde\mu^2}{\mu+\tilde\mu} \  \Vert \nabla^s \ u \Vert^2_F + \frac{\tilde\rho^2}{\rho+\tilde\rho}\ \Vert u \Vert^2 \Big) \ dx.
\end{aligned}
\end{equation*}
Therefore
\begin{equation*}
\frac{ R(g)}{\Vert(\tilde \lambda,\tilde \mu,\tilde\rho)  \Vert}
\leq   \int_\Omega \Big( \frac{\vert \tilde\lambda\vert}{\lambda+\tilde\lambda} \ \Vert \nabla \cdot u^g \Vert^2 + 2 \ \frac{ \vert \tilde \mu\vert}{\mu+\tilde\mu} \  \Vert \nabla^s \ u^g \Vert^2_F + \frac{\vert\tilde\rho\vert}{\rho+\tilde\rho}\ \Vert u^g \Vert^2 \Big) \ dx.
\end{equation*}
If $\Vert(\tilde \lambda,\tilde \mu,\tilde\rho)  \Vert$,   then $ \tilde \lambda, \tilde\mu, \tilde\rho \longrightarrow 0.$
Thanks to the  boundedness of $u$, we obtain 
\[
\frac{ R(g)}{\Vert(\tilde \lambda,\tilde \mu,\tilde\rho)  \Vert}\rightarrow 0 \text{ as }   \Vert(\tilde \lambda,\tilde \mu,\tilde\rho)  \Vert \rightarrow 0.
\]
This completes the proof.
\end{proof}
\subsection{Support definitions and the inverse  problem}
\begin{definition}
A relatively open set  $U\subseteq \overline{\Omega}$ is called connected to $\partial\Omega$ if  $U \cap\Omega$
is connected and $U\cap \partial \Omega \neq \emptyset$.
\end{definition}
\begin{definition}
 For a measurable function  $\varphi:  \Omega \rightarrow \mathbb{R}$,  we define
 \begin{itemize}
 \item[a)] the support supp$(\varphi)$ as the complement (in $\overline{\Omega}$) of the union of those relatively
 open $U\subseteq\overline{\Omega}$, for which  $\varphi|_{U}\equiv 0$.
 \item[b)] the inner support inn supp$(\varphi)$ as the union of those open sets $U\subseteq \Omega$, for which  ess $\inf_{x\in U}|\varphi(x)| >0$.
 \item[c)] the outer support $\text{out}_{\partial\Omega}$ supp$(\varphi)$ as the complement (in $\overline{\Omega}$) of the union of those relatively open $U\subseteq \overline{\Omega}$ 
 that are connected to $\partial\Omega$ and for which 
 $\varphi|_{U}\equiv 0$.
 \end{itemize}
\end{definition}
\begin{lemma}
If $\varphi$ is piecewise continuous,  $\text{supp}(\varphi) \subset  \Omega$, and $\Omega \setminus  \text{supp}(\varphi)$ is
connected, then
\[
\overline{\text{innsupp}(\varphi)} = \text{supp}(\varphi) = \text{out}_{\partial\Omega} \text{supp}(\varphi).
\]
\end{lemma}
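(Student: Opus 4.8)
The plan is to prove each of the two equalities by two opposite inclusions, isolating which hypothesis each inclusion needs. Two of the four inclusions hold in full generality, independently of the assumptions. First, $\text{innsupp}(\varphi)\subseteq\text{supp}(\varphi)$: if $V\subseteq\Omega$ is open with $\operatorname{ess\,inf}_V|\varphi|>0$ and a point $x\in V$ were to lie in some relatively open $W$ with $\varphi|_W\equiv0$, then $\varphi$ would vanish on the nonempty open set $V\cap W$, contradicting the positive essential infimum; since $\text{supp}(\varphi)$ is relatively closed (being the complement of a union of relatively open sets), passing to closures yields $\overline{\text{innsupp}(\varphi)}\subseteq\text{supp}(\varphi)$. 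Second, $\text{supp}(\varphi)\subseteq\text{out}_{\partial\Omega}\text{supp}(\varphi)$, because the union defining the complement of the outer support ranges over a sub-collection (only those $U$ connected to $\partial\Omega$) of the union defining the complement of the support; the former union is therefore smaller and its complement larger. It remains to establish the two reverse inclusions, and this is where the hypotheses enter.

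For $\text{out}_{\partial\Omega}\text{supp}(\varphi)\subseteq\text{supp}(\varphi)$ I would use the connectivity assumption directly. Set $Z:=\overline{\Omega}\setminus\text{supp}(\varphi)$, the largest relatively open subset of $\overline{\Omega}$ on which $\varphi\equiv0$. Since $\text{supp}(\varphi)\subset\Omega$ we have $\text{supp}(\varphi)\cap\partial\Omega=\emptyset$, hence $\partial\Omega\subseteq Z$, so $Z\cap\partial\Omega=\partial\Omega\neq\emptyset$; moreover $Z\cap\Omega=\Omega\setminus\text{supp}(\varphi)$ is connected by hypothesis. Thus $Z$ is itself a relatively open set connected to $\partial\Omega$ with $\varphi|_Z\equiv0$, so $Z$ occurs in the union defining $\overline{\Omega}\setminus\text{out}_{\partial\Omega}\text{supp}(\varphi)$. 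Consequently $Z\subseteq\overline{\Omega}\setminus\text{out}_{\partial\Omega}\text{supp}(\varphi)$, which upon complementing gives $\text{out}_{\partial\Omega}\text{supp}(\varphi)\subseteq\text{supp}(\varphi)$.

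The remaining inclusion $\text{supp}(\varphi)\subseteq\overline{\text{innsupp}(\varphi)}$ is the main obstacle, and it is here that piecewise continuity is essential. I would fix $x\in\text{supp}(\varphi)$ and an arbitrary relatively open neighbourhood $U\ni x$, and aim to produce a point of $\text{innsupp}(\varphi)$ inside $U$. By the definition of the support, $\varphi$ is not identically zero on $U$. The delicate point is that the points of $U$ where $\varphi\neq0$ could a priori all lie on the (measure-zero, nowhere-dense) interfaces between the continuity pieces, where $\varphi$ need not be continuous. To rule this out, I would argue that if $\varphi$ vanished at every continuity point of $U$, then for each piece $\Omega_i$ meeting $U$ the continuous extension $\varphi|_{\overline{\Omega_i}}$ would vanish on the dense set of interior continuity points and hence, by one-sided continuity, at every point of $\overline{\Omega_i}\cap U$; as the $\overline{\Omega_i}$ cover $U$, this would force $\varphi|_U\equiv0$, contradicting $x\in\text{supp}(\varphi)$. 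Therefore there is a continuity point $y\in U$ with $\varphi(y)\neq0$, and by continuity $|\varphi|\geq|\varphi(y)|/2$ on a ball $B\subseteq U$ centred at $y$, so $\operatorname{ess\,inf}_B|\varphi|>0$ and $B\subseteq\text{innsupp}(\varphi)$. Hence every neighbourhood of $x$ meets $\text{innsupp}(\varphi)$, i.e.\ $x\in\overline{\text{innsupp}(\varphi)}$, which completes the proof. The only genuine work is the interface argument just sketched; the rest is bookkeeping with the definitions.
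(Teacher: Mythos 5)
The paper states this lemma without proof (it is a standard fact in the monotonicity-method literature, going back to Harrach--Ullrich), so there is no in-paper argument to compare against; your proposal is, as far as I can tell, a correct and complete verification directly from the paper's definitions. The decomposition into four inclusions is the natural one, and you correctly isolate where each hypothesis is used: the two ``free'' inclusions $\overline{\text{innsupp}(\varphi)}\subseteq\text{supp}(\varphi)$ and $\text{supp}(\varphi)\subseteq\text{out}_{\partial\Omega}\text{supp}(\varphi)$, the connectivity hypothesis for $\text{out}_{\partial\Omega}\text{supp}(\varphi)\subseteq\text{supp}(\varphi)$ (taking $Z=\overline{\Omega}\setminus\text{supp}(\varphi)$ itself as the competitor in the union, using $\partial\Omega\subseteq Z$), and piecewise continuity for $\text{supp}(\varphi)\subseteq\overline{\text{innsupp}(\varphi)}$. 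Two small points of hygiene: the condition $\varphi|_U\equiv 0$ in the definitions must be read in the almost-everywhere sense for a merely measurable $\varphi$ --- with a pointwise-everywhere reading the lemma fails (e.g.\ for the indicator of an interface of measure zero), and your interface argument in the last step implicitly uses the a.e.\ reading when it concludes ``this would force $\varphi|_U\equiv 0$'' from vanishing of the continuous extensions on the open pieces; and the ball $B$ centred at the continuity point $y$ must be taken inside the continuity piece $\Omega_i\cap U$ so that the continuous extension controls $\varphi$ on all of $B$. Neither point is a gap, just wording to tighten.
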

In this work, we are specifically interested in identifying the spatial distribution (or shape) of perturbations in the material parameters $\lambda$, $\mu$, and $\rho$, assumed to vary over a bounded domain $\Omega \subset \mathbb{R}^d$. These parameters represent  deviations from a homogeneous background medium characterized by the constant reference values $\lambda_0$, $\mu_0$, and $\rho_0 > 0$. More precisely, we consider:
\begin{equation}
\lambda(x) = \lambda_0 + \delta\lambda(x), \quad
\mu(x) = \mu_0 + \delta\mu(x), \quad
\rho(x) = \rho_0 + \delta\rho(x), \quad x \in \Omega,
\end{equation}
where the  perturbations $\delta\lambda$, $\delta\mu$, and $\delta\rho$ are to be determined.

The inverse problem we address here is the following.
\begin{equation}\label{invp2}
\tag{IP$$}\left\{
\begin{aligned}
&\text{Determine  the support of  perturbations  }\; \text{supp  } \delta \lambda(x)\cup \text{supp  } \delta \mu(x) \cup
\text{supp  }\delta \rho(x)\\ 
&\text{ from the Neumann-to-Dirichlet map } \Lambda(\lambda, \mu, \rho).
\end{aligned}\right.
\end{equation}
\section{Shape  reconstruction  via the linearized  monotonicity  test}
In this this section, we assume that parameters  variations   are of the form
$\delta\lambda=(\lambda_1-\lambda_0)\chi_{D}$,   $\delta\mu=(\mu_1-\mu_0)\chi_{D}$ and
$\delta\rho=(\rho_1-\rho_0)\chi_{D}$,
and    the values  $\lambda_0, \lambda_1,  \mu_0, \mu_1$, $\rho_0$ and $\rho_1$  are known {\it a priori} but the information about the geometry  $D\Subset\Omega$ is missing. For the full characterization of ($\delta\lambda,\delta\mu,\delta\rho)$   it is then sufficient to find the geometry $D$.

 We begin by introducing the concept of Loewner monotonicity, followed by the derivation of the standard (non-linearized) monotonicity method and its more efficient linearized variant. Finally, we present a noise-robust regularization approach based on monotonicity.
\subsection{Loewner monotonicity}
To introduce the monotonicity-based method, we first recall the following lemma.
\begin{lemma}\label{mono_loewn}
Let $\lambda_1, \lambda_2, \mu_1, \mu_2, \rho_1, \rho_2  \in L^\infty_+(\Omega)$,  and let  $g\in L^2(\Gamma_{\textup N})^d$ be an applied boundary load. The corresponding solutions of (\ref{direct2}) are denoted by $u_1:=u^g_{\lambda_1,\mu_1,\rho_1},\ u_2:=u^{g}_{\lambda_2,\mu_2,\rho_2}\in \mathcal{V}$. Then
\begin{equation}
\begin{aligned}
&\int_\Omega(\lambda_1-\lambda_2)\Vert\nabla\cdot u_2 \Vert^2\,dx+2\int_\Omega(\mu_1-\mu_2)\Vert\nabla^s u_2\Vert^2_F\,dx + \int_\Omega (\rho_1-\rho_2)\Vert u_2\Vert^2\,dx\\
&\geq \langle g,\Lambda(\lambda_2,\mu_2,\rho_2)g\rangle-\langle g,\Lambda(\lambda_1,\mu_1,\rho_1)g\rangle \geq\\
&\int_\Omega(\lambda_1-\lambda_2)\Vert\nabla\cdot u_1 \Vert^2\,dx+ 2\int_\Omega(\mu_1-\mu_2)\Vert\nabla^s u_1\Vert^2_F\,dx +  \int_\Omega(\rho_1-\rho_2)\Vert u_1\Vert^2\,dx.
\end{aligned}
\end{equation}
\end{lemma}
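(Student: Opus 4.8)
The plan is to reduce everything to the symmetric bilinear form associated with the system and to exploit its positivity. For parameters $(\lambda,\mu,\rho)$ I introduce the bilinear form $B_{\lambda,\mu,\rho}(u,v) := \int_\Omega \lambda(\nabla\cdot u)(\nabla\cdot v) + 2\mu\,\nabla^s u:\nabla^s v + \rho\, u\cdot v\,dx$, so that the weak formulation reads $B_{\lambda,\mu,\rho}(u^g,v)=\int_{\Gamma_N} g\cdot v\,ds$ for all $v\in\mathcal V$, and in particular $\langle g,\Lambda(\lambda,\mu,\rho)g\rangle = B_{\lambda,\mu,\rho}(u^g,u^g)$. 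Writing $B_i:=B_{\lambda_i,\mu_i,\rho_i}$ for $i=1,2$, the quantity to be bracketed is $B_2(u_2,u_2)-B_1(u_1,u_1)$, while the two outer expressions in the claim are exactly $B_1(u_2,u_2)-B_2(u_2,u_2)$ (upper) and $B_1(u_1,u_1)-B_2(u_1,u_1)$ (lower), since $B_i(u,u)-B_j(u,u)=\int_\Omega(\lambda_i-\lambda_j)\|\nabla\cdot u\|^2+2(\mu_i-\mu_j)\|\nabla^s u\|_F^2+(\rho_i-\rho_j)\|u\|^2\,dx$.

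The crucial structural input is that, because $u_1$ and $u_2$ are generated by the \emph{same} boundary load $g$, the weak formulation yields the mixed identity $B_1(u_1,v)=\int_{\Gamma_N}g\cdot v\,ds=B_2(u_2,v)$ for every $v\in\mathcal V$. I will combine this with the positive semidefiniteness of $B_1$ and $B_2$, which holds because $\lambda_i,\mu_i,\rho_i\in L^\infty_+(\Omega)$ make each integrand a nonnegatively weighted sum of squares. For the lower bound I would start from $0\le B_2(u_1-u_2,u_1-u_2)=B_2(u_1,u_1)-2B_2(u_2,u_1)+B_2(u_2,u_2)$ and use the mixed identity with $v=u_1$, namely $B_2(u_2,u_1)=B_1(u_1,u_1)$; rearranging gives $B_2(u_2,u_2)-B_1(u_1,u_1)\ge B_1(u_1,u_1)-B_2(u_1,u_1)$, which is precisely the right-hand inequality. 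Symmetrically, for the upper bound I would expand $0\le B_1(u_2-u_1,u_2-u_1)$ and substitute $B_1(u_1,u_2)=B_2(u_2,u_2)$ (the mixed identity with $v=u_2$), obtaining $B_1(u_2,u_2)-B_2(u_2,u_2)\ge B_2(u_2,u_2)-B_1(u_1,u_1)$, the left-hand inequality.

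I do not expect a serious obstacle: this is the elasticity analogue of the classical Loewner monotonicity estimate, and once the two correct expansions are chosen it closes immediately. The only points requiring care are bookkeeping ones — keeping track of which form's positivity ($B_1$ versus $B_2$) produces which bound, and correctly using the self-adjointness of the symmetric forms so that the cross terms $B_i(u_i,u_j)$ can be replaced by the boundary pairing $\int_{\Gamma_N}g\cdot u_j\,ds$. Positivity of $B_1,B_2$ and well-posedness of $u_1,u_2$ are guaranteed by the $L^\infty_+$ assumption through Lax--Milgram, exactly as used for the existence statement following the weak formulation.
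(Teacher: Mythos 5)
The paper does not actually give a proof of this lemma; it simply defers to \cite{meftahi2025stability}. Your argument is correct and complete: the identification $\langle g,\Lambda(\lambda_i,\mu_i,\rho_i)g\rangle=B_i(u_i,u_i)$, the mixed identity $B_1(u_1,v)=\int_{\Gamma_N}g\cdot v\,ds=B_2(u_2,v)$, and the two expansions $0\le B_2(u_1-u_2,u_1-u_2)$ and $0\le B_1(u_2-u_1,u_2-u_1)$ yield exactly the lower and upper bounds as stated. It is worth noting that your lower-bound computation is precisely the identity
\[
B_2(u_2,u_2)-B_1(u_1,u_1)=B_2(u_2-u_1,u_2-u_1)+\bigl(B_1(u_1,u_1)-B_2(u_1,u_1)\bigr),
\]
which the paper writes out explicitly as the first display in its proof of Lemma~\ref{mono_bis}; so your route is the standard one and consistent with the machinery the authors use elsewhere in the paper.
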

\begin{proof}
The proof of Lemma \ref{mono_loewn} is given in \cite{meftahi2025stability}.
\end{proof}
\begin{lemma}
  \label{mono_bis}
Let $\lambda_1, \lambda_2, \mu_1, \mu_2, \rho_1, \rho_2  \in L^\infty_+(\Omega)$,  and let  $g\in L^2(\Gamma_{\textup N},\R^d)$ be an applied boundary load. The corresponding solutions of (\ref{direct2}) are denoted by $u_1:=u^g_{\lambda_1,\mu_1,\rho_1},\ u_2:=u^{g}_{\lambda_2,\mu_2,\rho_2
      }\in \mathcal{V}$. Then
\begin{equation*}
\begin{aligned}
& \langle g,\Lambda(\lambda_2,\mu_2,\rho_2)g\rangle-\langle g,\Lambda(\lambda_1,\mu_1,\rho_1)g\rangle \geq\\
&\int_\Omega(\lambda_2-\frac{\lambda_2^2}{\lambda_1})\Vert\nabla\cdot u_1 \Vert^2\,dx+ 2\int_\Omega(\mu_2-\frac{\mu_2^2}{\mu_1})\Vert\nabla^s u_1\Vert^2_F\,dx +  \int_\Omega(\rho_2-\frac{\rho_2^2}{\rho_1})\Vert u_1\Vert^2\,dx. \\
& = \int_\Omega \frac{\lambda_2}{\lambda_1} (\lambda_1-\lambda_2) \Vert\nabla\cdot u_1 \Vert^2\,dx+ 2\int_\Omega \frac{\mu_2}{\mu_1}  (\mu_1-\mu_2)\Vert\nabla^s u_1\Vert^2_F\,dx +  \int_\Omega \frac{\rho_2}{\rho_1} (\rho_1-\rho_2)\Vert u_1\Vert^2\,dx.
\end{aligned}
\end{equation*}
\end{lemma}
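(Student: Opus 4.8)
The plan is to combine the energy (bilinear-form) representation of the Neumann-to-Dirichlet operator with the weak formulation of \eqref{direct2} and a single elementary pointwise inequality. Throughout, write $a_j(\cdot,\cdot)$ for the symmetric bilinear form associated with the parameters $(\lambda_j,\mu_j,\rho_j)$, $j=1,2$, and set $\Lambda_j:=\Lambda(\lambda_j,\mu_j,\rho_j)$ for brevity, so that $\langle g,\Lambda_j g\rangle=a_j(u_j,u_j)$ and $a_j(u_j,v)=\int_{\Gamma_N}g\cdot v\,ds$ for every $v\in\mathcal V$.

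First I would derive an exact energy identity. Expanding $a_2(u_1-u_2,u_1-u_2)\ge 0$ and using the weak formulation in the cross term, namely $a_2(u_2,u_1)=\int_{\Gamma_N}g\cdot u_1\,ds=a_1(u_1,u_1)=\langle g,\Lambda_1 g\rangle$, one obtains
\[
\langle g,\Lambda_2 g\rangle-\langle g,\Lambda_1 g\rangle
= a_1(u_1,u_1)-a_2(u_1,u_1)+a_2(u_1-u_2,u_1-u_2).
\]
Writing $a_1(u_1,u_1)-a_2(u_1,u_1)$ out term by term reproduces precisely the integrand $\int_\Omega(\lambda_1-\lambda_2)\Vert\nabla\cdot u_1\Vert^2+2(\mu_1-\mu_2)\Vert\nabla^s u_1\Vert_F^2+(\rho_1-\rho_2)\Vert u_1\Vert^2\,dx$ appearing in Lemma~\ref{mono_loewn}; thus this identity is the exact, remainder-retaining version of the lower bound there, the remainder $a_2(u_1-u_2,u_1-u_2)$ being nonnegative by positivity of the parameters.

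Second, I would pass from the coefficient $\lambda_1-\lambda_2$ to the coefficient $\lambda_2-\lambda_2^2/\lambda_1$ by the pointwise identity $\lambda_1-\lambda_2=\frac{\lambda_2}{\lambda_1}(\lambda_1-\lambda_2)+\frac{(\lambda_1-\lambda_2)^2}{\lambda_1}$, valid since $\lambda_1\in L^\infty_+(\Omega)$, and likewise for $\mu$ and $\rho$. Because the factors $\Vert\nabla\cdot u_1\Vert^2$, $\Vert\nabla^s u_1\Vert_F^2$ and $\Vert u_1\Vert^2$ are nonnegative, the perfect-square contributions $\frac{(\lambda_1-\lambda_2)^2}{\lambda_1}\Vert\nabla\cdot u_1\Vert^2$, and so on, integrate to a nonnegative quantity; discarding these together with $a_2(u_1-u_2,u_1-u_2)\ge 0$ yields
\[
\langle g,\Lambda_2 g\rangle-\langle g,\Lambda_1 g\rangle
\ge \int_\Omega \frac{\lambda_2}{\lambda_1}(\lambda_1-\lambda_2)\Vert\nabla\cdot u_1\Vert^2+2\frac{\mu_2}{\mu_1}(\mu_1-\mu_2)\Vert\nabla^s u_1\Vert_F^2+\frac{\rho_2}{\rho_1}(\rho_1-\rho_2)\Vert u_1\Vert^2\,dx.
\]
The asserted equivalent form then follows from the purely algebraic identities $\frac{\lambda_2}{\lambda_1}(\lambda_1-\lambda_2)=\lambda_2-\frac{\lambda_2^2}{\lambda_1}$, and similarly for $\mu$ and $\rho$. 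Equivalently, one may skip the identity altogether and simply invoke the lower bound of Lemma~\ref{mono_loewn} directly, then apply the same termwise pointwise inequality.

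I do not expect a genuine obstacle here; the content is essentially sign bookkeeping. The one point demanding care is that the three parameters are coupled in the energy difference through the single cross-term relation $a_2(u_2,u_1)=a_1(u_1,u_1)$, so a naive term-by-term manipulation of $\langle g,\Lambda_2 g\rangle-\langle g,\Lambda_1 g\rangle$ is not directly available; the coupling is absorbed cleanly into the single nonnegative remainder $a_2(u_1-u_2,u_1-u_2)$, after which the weakening to the stated coefficients is entirely pointwise and termwise. It is worth recording that the present bound is deliberately weaker than the lower bound of Lemma~\ref{mono_loewn} (the two differ by the nonnegative quantity $\int_\Omega \frac{(\lambda_1-\lambda_2)^2}{\lambda_1}\Vert\nabla\cdot u_1\Vert^2\,dx+\cdots$); its point is that the coefficient $\lambda_2-\lambda_2^2/\lambda_1$ is exactly the one that combines with the first-order term to produce the quadratic remainder needed in the proof of Proposition~\ref{prop_diff}.
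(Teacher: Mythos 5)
Your argument is correct for the statement as literally printed, but it takes a genuinely different route from the paper's, and the difference is substantive. Both proofs start from the same exact identity $\langle g,\Lambda_2 g\rangle-\langle g,\Lambda_1 g\rangle=a_1(u_1,u_1)-a_2(u_1,u_1)+a_2(u_1-u_2,u_1-u_2)$. From there you weaken the coefficients pointwise via $\lambda_1-\lambda_2=\tfrac{\lambda_2}{\lambda_1}(\lambda_1-\lambda_2)+\tfrac{(\lambda_1-\lambda_2)^2}{\lambda_1}$, keeping the factors $\Vert\nabla\cdot u_1\Vert^2$, $\Vert\nabla^s u_1\Vert_F^2$, $\Vert u_1\Vert^2$ throughout; as you yourself observe, this makes the conclusion a strict weakening of the lower bound of Lemma~\ref{mono_loewn}, hence logically redundant. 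The paper instead completes a different square, $\lambda_1\Vert\nabla\cdot u_1-\tfrac{\lambda_2}{\lambda_1}\nabla\cdot u_2\Vert^2$ and its analogues, which after discarding the squares yields the coefficient $\lambda_2-\tfrac{\lambda_2^2}{\lambda_1}$ multiplying $\Vert\nabla\cdot u_2\Vert^2$, i.e.\ the norm of the \emph{other} solution. That interchanged estimate is the standard complementary monotonicity bound; it is not comparable to Lemma~\ref{mono_loewn}, and it is the version actually used in the proof of Proposition~\ref{prop_diff}, where the quadratic remainder must be expressed in terms of the unperturbed solution $u=u_2$. The penultimate line of the paper's proof indeed produces $u_2$, and the switch to $u_1$ in the last line and in the lemma's statement is best read as a typo. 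So your proof validates the printed statement but not the estimate the paper needs, and your pointwise-coefficient trick cannot produce that estimate: the coefficient $\lambda_2-\lambda_2^2/\lambda_1$ is sign-indefinite without an ordering assumption, so one cannot pass between $\Vert\nabla\cdot u_1\Vert^2$ and $\Vert\nabla\cdot u_2\Vert^2$ termwise. To obtain the version that feeds into Proposition~\ref{prop_diff}, you must retain the paper's completion of the square.
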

\begin{proof} 
We  have 
\begin{equation*}
\begin{aligned}
&  \langle g,\Lambda(\lambda_2,\mu_2,\rho_2)g\rangle-\langle g,\Lambda(\lambda_1,\mu_1,\rho_1)g\rangle 
 =  \int_\Omega \Big( \lambda_2 \  \nabla \cdot (u_2-u_1) \  \nabla \cdot (u_2-u_1) \\
& + 2  \ \mu_2 \ \nabla^s (u_2-u_1) : \nabla^s (u_2-u_1) + \rho_2 \ (u_2-u_1) \cdot (u_2-u_1)  \\
&    + (\lambda_1-\lambda_2) \  \Vert\nabla\cdot u_1 \Vert^2 + 2 \ ( \mu_1-\mu_2) \ \Vert\nabla^s u_1\Vert^2_F + (\rho_1-\rho_2) \ \Vert u_1\Vert^2 \Big) \  dx.  
\end{aligned}
\end{equation*}
Therefore
\begin{equation*}
\begin{aligned}
&  \langle g,\Lambda(\lambda_2,\mu_2,\rho_2)g\rangle-\langle g,\Lambda(\lambda_1,\mu_1,\rho_1)g\rangle \\
& = \int_\Omega \Big( \lambda_2  \   \Vert\nabla\cdot u_2 \Vert^2 + \lambda_2 \  \Vert\nabla\cdot u_1 \Vert^2
-2 \ \lambda_2 \ \nabla \cdot u_2 \ \nabla \cdot u_1+ \lambda_1  \  \Vert\nabla\cdot u_1 \Vert^2 \\
&- \lambda_2 \  \Vert\nabla\cdot u_1 \Vert^2 + 2 \  \mu_2 \ \Vert\nabla^s u_2\Vert^2_F 
+ 2 \ \mu_2 \  \Vert\nabla^s u_1\Vert^2_F - 4 \ \mu_2 \ \nabla^s u_2 : \nabla^s u_1 \\
&+  2 \ \mu_1 \ \Vert\nabla^s u_1\Vert^2_F - 2 \ \mu_2 \ \Vert\nabla^s u_1\Vert^2_F \\
&  + \rho_2 \   \Vert u_2\Vert^2 + \rho_2 \  \Vert u_1\Vert^2  -2 \ \rho_2 \ u_2 \cdot u_1+ \rho_1 \  \Vert u_1\Vert^2  - \rho_2 \  \Vert u_1\Vert^2  \Big) \ dx \\
& = \int_\Omega \Big( \lambda_2 \  \Vert\nabla\cdot u_2 \Vert^2 - 2 \ \lambda_2 \ \nabla \cdot u_2 \  \nabla \cdot u_1+ \lambda_1 \  \Vert\nabla\cdot u_1 \Vert^2   + 2 \ \mu_2 \   \Vert\nabla^s u_2\Vert^2_F \\
&- 4  \ \mu_2 \ \nabla^s \ u_2 : \nabla^s  \ u_1  + 2 \ \mu_1 \  \Vert\nabla^s u_1\Vert^2_F  + \rho_2 \    \Vert u_2\Vert^2  - 2 \ \rho_2 \ u_2 \cdot u_1 + \rho_1 \   \Vert u_1\Vert^2 \Big) \ dx \\
& = \int_\Omega \Big( \lambda_1 \ \Vert \nabla \cdot u_1 - \frac{\lambda_2}{\lambda_1} \ \nabla \cdot u_2 \Vert^2 + 2 \ \mu_1 \Vert \nabla^s \ u_1 - \frac{\mu_2}{\mu_1} \ \nabla^s \ u_2 \Vert^2_F+ \rho_1 \ \Vert u_1 - \frac{\rho_2}{\rho_1} \ u_2 \Vert^2 \\
& + ( \lambda_2 - \frac{\lambda_2^2}{\lambda_1}) \ \Vert \nabla \cdot u_2 \Vert^2 + 2 \ (\mu_2 -\frac{\mu_2^2}{\mu_1} ) \Vert \nabla^s \ u_2 \Vert^2_F+ (\rho_2-\frac{\rho_2^2}{\rho_1}) \Vert u_2 \Vert^2 \Big) \  dx \\
& \geq  \int_\Omega \Big(  ( \lambda_2 - \frac{\lambda_2^2}{\lambda_1}) \ \Vert \nabla \cdot u_1 \Vert^2 + 2 \ (\mu_2 -\frac{\mu_2^2}{\mu_1} ) \Vert \nabla^s \ u_1 \Vert^2_F+ (\rho_2-\frac{\rho_2^2}{\rho_1}) \Vert u_1 \Vert^2 \Big) \  dx. 
\end{aligned}
\end{equation*}
\end{proof}
 Lemma \ref{mono_loewn}  implies monotonicity of the mapping  $(\lambda,\mu,\rho)\rightarrow \Lambda(\lambda,\mu,\rho)$ with respect to the following partial orderings. For symmetric operators $T,  S \in  \mathcal{L}(L^2(\Gamma_N)^d)$ we introduce the
 semidefinite (aka Loewner) ordering:
\[
T\preceq S \text{ denotes that } \int_{\Gamma_N}g(S-T)g\,ds \geq 0,\quad\text{ for all  }
g\in L^2(\Gamma_N)^d.
\]
Also,  for functions  $\kappa, \tau \in L^\infty(\Omega)$
\[
\kappa  \leq \tau \text{ denotes that }  \tau(x)\geq \kappa(x) \text{  a.e } x\in \Omega.
\]
Using this notation, we directly derive the following result as a consequence of Lemma \ref{mono_loewn}.
\begin{corollary}
Let $\lambda_0, \lambda_1, \mu_0, 
\mu_1,   \rho_0, \rho_1\in L^{\infty}(\Omega)$. Then
\begin{equation}
 \lambda_0 \leq \lambda_1,   \quad
\mu_0 \leq \mu_1 \quad\text{ and }\quad \rho_0 \leq \rho_1, \quad\text{ implies } 
\quad\Lambda(\lambda_0, \mu_0,\rho_1) \succeq \Lambda(\lambda_1, \mu_1,\rho_1).
\end{equation}
\end{corollary}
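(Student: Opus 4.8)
The plan is to read the corollary off directly from the lower bound in Lemma~\ref{mono_loewn}, after a suitable labeling of the two parameter triples.

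First I would fix an arbitrary load $g\in L^2(\Gamma_N)^d$ and recall that, by the definition of the Loewner ordering introduced above, the claimed relation $\Lambda(\lambda_0,\mu_0,\rho_0)\succeq\Lambda(\lambda_1,\mu_1,\rho_1)$ is equivalent to the scalar inequality
\[
\langle g,\Lambda(\lambda_0,\mu_0,\rho_0)g\rangle-\langle g,\Lambda(\lambda_1,\mu_1,\rho_1)g\rangle\geq 0
\]
holding for every such $g$. Thus it suffices to establish this one-parameter family of scalar inequalities.

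Next I would apply Lemma~\ref{mono_loewn} with the identification $(\lambda_2,\mu_2,\rho_2):=(\lambda_0,\mu_0,\rho_0)$, retaining $(\lambda_1,\mu_1,\rho_1)$ as the first triple, so that $u_1=u^g_{\lambda_1,\mu_1,\rho_1}$. The lower estimate in that lemma then gives
\[
\langle g,\Lambda(\lambda_0,\mu_0,\rho_0)g\rangle-\langle g,\Lambda(\lambda_1,\mu_1,\rho_1)g\rangle\geq\int_\Omega\Big((\lambda_1-\lambda_0)\Vert\nabla\cdot u_1\Vert^2+2(\mu_1-\mu_0)\Vert\nabla^s u_1\Vert_F^2+(\rho_1-\rho_0)\Vert u_1\Vert^2\Big)\,dx.
\]
Finally I would argue that the right-hand side is nonnegative term by term. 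The hypotheses $\lambda_0\leq\lambda_1$, $\mu_0\leq\mu_1$, $\rho_0\leq\rho_1$ mean precisely that the coefficients $\lambda_1-\lambda_0$, $\mu_1-\mu_0$, $\rho_1-\rho_0$ are nonnegative almost everywhere, whereas the quadratic factors $\Vert\nabla\cdot u_1\Vert^2$, $\Vert\nabla^s u_1\Vert_F^2$, $\Vert u_1\Vert^2$ are pointwise nonnegative by construction. Hence each of the three integrals is nonnegative, their sum is nonnegative, and the scalar inequality follows; since $g$ was arbitrary, the Loewner ordering is proved.

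I do not anticipate any real obstacle, since the full analytic content is already contained in Lemma~\ref{mono_loewn}. The only point requiring care is the bookkeeping of which triple is labeled ``$1$'' and which ``$2$'': one must use the \emph{lower} bound of the lemma (the one expressed through $u_1$), because it is there that the integrand factors as a sign-definite coefficient times a nonnegative quadratic form. Swapping the two roles would invoke the upper bound and yield the reversed ordering, so matching the monotonicity direction to the hypotheses is the single step where an error could creep in.
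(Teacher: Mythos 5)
Your argument is correct and is exactly the one the paper intends: the corollary is presented there as a direct consequence of Lemma~\ref{mono_loewn}, and you supply the necessary bookkeeping (taking $(\lambda_2,\mu_2,\rho_2)=(\lambda_0,\mu_0,\rho_0)$ and using the lower bound expressed through $u_1$) together with the termwise sign argument. Note only that the displayed conclusion in the corollary contains a typo --- $\Lambda(\lambda_0,\mu_0,\rho_1)$ should read $\Lambda(\lambda_0,\mu_0,\rho_0)$ --- which your proof implicitly corrects.
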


\begin{corollary}
Let $\lambda_0, \lambda_1, \mu_0, 
\mu_1,   \rho_0, \rho_1\in L^{\infty}(\Omega)$. Then
\begin{equation}
 \lambda_0 \leq \lambda_1,   
\mu_0 \leq \mu_1 \text{ and } \rho_0 \leq \rho_1, \text{ implies } 
\Lambda'(\lambda, \mu,\rho)(\lambda_0, \mu_0,\rho_1) \succeq \Lambda'(\lambda, \mu,\rho)(\lambda_1, \mu_1,\rho_1).
\end{equation}
\end{corollary}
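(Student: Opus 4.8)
The plan is to reduce the assertion to the explicit quadratic-form representation of the Fréchet derivative obtained in Proposition~\ref{prop_diff}, and then simply read off the sign from the hypotheses; the statement is the infinitesimal counterpart of the preceding corollary, so no genuinely new machinery is needed.

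First I would use that, by Proposition~\ref{prop_diff}, the map $(\tilde\lambda,\tilde\mu,\tilde\rho)\mapsto \Lambda'(\lambda,\mu,\rho)(\tilde\lambda,\tilde\mu,\tilde\rho)$ is bounded and \emph{linear} (this linearity was established at the end of that proof). Consequently the difference of the two derivatives appearing in the claim collapses to a single derivative taken in the direction of the parameter differences:
\[
\begin{aligned}
&\Lambda'(\lambda,\mu,\rho)(\lambda_0,\mu_0,\rho_0)-\Lambda'(\lambda,\mu,\rho)(\lambda_1,\mu_1,\rho_1)\\
&=\Lambda'(\lambda,\mu,\rho)(\lambda_0-\lambda_1,\ \mu_0-\mu_1,\ \rho_0-\rho_1).
\end{aligned}
\]

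Next, I would fix an arbitrary boundary load $g\in L^2(\Gamma_{\textup N})^d$ with associated solution $u^g$ and insert the direction $(\lambda_0-\lambda_1,\mu_0-\mu_1,\rho_0-\rho_1)$ into formula~\eqref{8}. The overall minus sign in \eqref{8} is absorbed by reversing each difference, yielding
\[
\begin{aligned}
&\big\langle \big[\Lambda'(\lambda,\mu,\rho)(\lambda_0,\mu_0,\rho_0)-\Lambda'(\lambda,\mu,\rho)(\lambda_1,\mu_1,\rho_1)\big]g,\, g\big\rangle\\
&=\int_\Omega\Big((\lambda_1-\lambda_0)\Vert\nabla\cdot u^g\Vert^2+2(\mu_1-\mu_0)\Vert\nabla^s u^g\Vert^2_F+(\rho_1-\rho_0)\Vert u^g\Vert^2\Big)\,dx.
\end{aligned}
\]
Finally I would invoke the hypotheses $\lambda_0\leq\lambda_1$, $\mu_0\leq\mu_1$, $\rho_0\leq\rho_1$, which in the pointwise a.e.\ ordering recalled before the corollary give $\lambda_1-\lambda_0\geq 0$, $\mu_1-\mu_0\geq 0$ and $\rho_1-\rho_0\geq 0$ a.e.\ in $\Omega$. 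Since $\Vert\nabla\cdot u^g\Vert^2$, $\Vert\nabla^s u^g\Vert^2_F$ and $\Vert u^g\Vert^2$ are all nonnegative, the integrand is nonnegative, so the right-hand side is $\geq 0$ for every $g\in L^2(\Gamma_{\textup N})^d$. By the definition of the Loewner ordering $\preceq$ recalled above, this is exactly $\Lambda'(\lambda,\mu,\rho)(\lambda_0,\mu_0,\rho_0)\succeq\Lambda'(\lambda,\mu,\rho)(\lambda_1,\mu_1,\rho_1)$.

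I do not anticipate a serious obstacle: once linearity of $\Lambda'$ in the perturbation direction and the representation \eqref{8} are granted, the conclusion is forced by the signs of the three coefficients together with the nonnegativity of the integrands. The only point deserving explicit mention is the justification of that linearity, which I would cite from Proposition~\ref{prop_diff} rather than re-derive, and a remark that the a.e.\ inequalities between the $L^\infty$ functions are precisely what transfer into the sign of the integrand.
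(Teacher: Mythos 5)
Your proof is correct and is precisely the argument the paper intends: the corollary is stated without an explicit proof as an immediate consequence of the representation \eqref{8}, and your route (linearity of the directional derivative, then reading off the sign of the integrand under the a.e.\ ordering) is that intended argument. The only point worth flagging is that the corollary as printed appears to contain a typo ($\rho_1$ in place of $\rho_0$ in the left-hand direction), which your proof implicitly corrects by working with $(\lambda_0,\mu_0,\rho_0)$.
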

\subsection{The linearized monotonicity  test}
The shape $D$ can be seen as the outer support of of  $(\lambda-\lambda_0,\mu-\mu_0,\rho-\rho_0)^T$  i.e.,
\[
D= {\rm out}_{\partial\Omega} {\rm supp}(\lambda-\lambda_0,\mu-\mu_0,\rho-\rho_0)^T.
\]
Under the assumption, that the complement of $D$ is connected to
 the boundary $\partial\Omega$, and  the parameters  $\lambda_0, \mu_0, \rho_0$  in  the
 background and   $\lambda_1, \mu_1,\rho_1$ inside $D$ are constant,  we  have the following results:
\begin{theorem}
\label{mono_test}
Let $\lambda_0,  \lambda_1, \mu_0, \mu_1, \rho_0, \rho_1\in \mathbb{R}_+$  
with  $ \lambda_1 \geq \lambda_0, \ \mu_1 \geq \mu_0, \ \rho _1 \geq \rho_0$  and assume that 
\[
\lambda=\lambda_0+(\lambda_1-\lambda_0)\chi_D, \quad  \mu=\mu_0+(\mu_1-\mu_0)\chi_D,\quad 
\rho=\rho_0+(\rho_1-\rho_0)\chi_D.
\]
Further on let $C^{\lambda}, C^{\mu}, C^{\rho}\in \mathbb{R}_+$ and   $C^{\lambda}+ C^{\mu}+ C^{\rho}>0$,  with 
\[
C^{\lambda}\leq \frac{\lambda_0}{\lambda_1}(\lambda_1-\lambda_0),\quad
C^{\mu}\leq \frac{\mu_0}{\mu_1}(\mu_1-\mu_0),\quad
C^{\rho}\leq \frac{\rho_0}{\rho_1}(\rho_1-\rho_0).
\]
Then, for every open set  $B$  
\[
\begin{aligned}
&B\subseteq D \text{ if and only if }\\
&\Lambda(\lambda_0,\mu_0,\rho_0)- \Lambda(\lambda,\mu,\rho)+    \Lambda'(\lambda_0,\mu_0,\rho_0)(C^{\lambda}\chi_B, C^{\mu}\chi_B,
C^{\rho}\chi_B )\geq 0 .
\end{aligned}
\]
\end{theorem}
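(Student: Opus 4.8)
The plan is to test the asserted operator inequality against an arbitrary load $g\in L^2(\Gamma_N)^d$ and reduce it to a scalar comparison of energies of the \emph{background} solution $u_0^g:=u^g_{\lambda_0,\mu_0,\rho_0}$. Denote by $M(B)$ the self-adjoint operator on the left-hand side. Writing its third term through \eqref{8} at the background parameters gives $\langle \Lambda'(\lambda_0,\mu_0,\rho_0)(C^\lambda\chi_B,C^\mu\chi_B,C^\rho\chi_B)g,g\rangle=-\int_B(C^\lambda\Vert\nabla\cdot u_0^g\Vert^2+2C^\mu\Vert\nabla^s u_0^g\Vert_F^2+C^\rho\Vert u_0^g\Vert^2)\,dx$. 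Setting $E(g):=\langle g,\Lambda(\lambda_0,\mu_0,\rho_0)g\rangle-\langle g,\Lambda(\lambda,\mu,\rho)g\rangle$, which is nonnegative by the monotonicity corollary above (since $\lambda\ge\lambda_0$, $\mu\ge\mu_0$, $\rho\ge\rho_0$), the claim $M(B)\succeq0$ becomes
\begin{equation*}
E(g)\ \ge\ \int_B\big(C^\lambda\Vert\nabla\cdot u_0^g\Vert^2+2C^\mu\Vert\nabla^s u_0^g\Vert_F^2+C^\rho\Vert u_0^g\Vert^2\big)\,dx\qquad\text{for all }g.\tag{$(\star)$}
\end{equation*}
Everything then hinges on bracketing $E(g)$ from above and below by integrals of these same background quantities.

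The crucial observation is that \emph{both} one-sided bounds for $E(g)$ can be expressed through $u_0^g$. Applying Lemma~\ref{mono_loewn} with first argument $(\lambda,\mu,\rho)$ and second $(\lambda_0,\mu_0,\rho_0)$, so that the solution occurring is $u_0^g$, yields the upper bound $E(g)\le\int_D\big((\lambda_1-\lambda_0)\Vert\nabla\cdot u_0^g\Vert^2+\ldots\big)dx$. For the lower bound I would not invoke the final inequality of Lemma~\ref{mono_bis} (which is phrased through the perturbed solution), but the algebraic identity established in its proof: discarding the nonnegative squares $\lambda\Vert\nabla\cdot u_0^g-\tfrac{\lambda_0}{\lambda}\nabla\cdot u^g\Vert^2$ and retaining the background terms gives
\begin{equation*}
E(g)\ \ge\ \int_D\Big(\tfrac{\lambda_0}{\lambda_1}(\lambda_1-\lambda_0)\Vert\nabla\cdot u_0^g\Vert^2+2\tfrac{\mu_0}{\mu_1}(\mu_1-\mu_0)\Vert\nabla^s u_0^g\Vert_F^2+\tfrac{\rho_0}{\rho_1}(\rho_1-\rho_0)\Vert u_0^g\Vert^2\Big)dx.\tag{$(\mathrm{LB})$}
\end{equation*}
If $B\subseteq D$, then $C^\lambda\le\tfrac{\lambda_0}{\lambda_1}(\lambda_1-\lambda_0)$ together with $B\subseteq D$ gives $\int_B C^\lambda\Vert\nabla\cdot u_0^g\Vert^2\,dx\le\int_D\tfrac{\lambda_0}{\lambda_1}(\lambda_1-\lambda_0)\Vert\nabla\cdot u_0^g\Vert^2\,dx$, and summing the three analogous inequalities, $(\mathrm{LB})$ yields $(\star)$. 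This proves $B\subseteq D\Rightarrow M(B)\succeq0$; the hypotheses on $C^\lambda,C^\mu,C^\rho$ are exactly what make the localized test coefficient fit beneath $(\mathrm{LB})$.

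For the converse I argue by contraposition. If $B\not\subseteq D$ there is a nonempty open set $O\subseteq B\setminus\overline{D}$. From the upper bound, after discarding the nonpositive contribution $-\int_{B\cap D}(\ldots)$,
\begin{equation*}
\langle g,M(B)g\rangle\le\int_D\big((\lambda_1-\lambda_0)\Vert\nabla\cdot u_0^g\Vert^2+\ldots\big)dx-\int_O\big(C^\lambda\Vert\nabla\cdot u_0^g\Vert^2+2C^\mu\Vert\nabla^s u_0^g\Vert_F^2+C^\rho\Vert u_0^g\Vert^2\big)dx.
\end{equation*}
Since $\overline{D}\cap O=\emptyset$ and $\Omega\setminus D$ is connected to $\partial\Omega$, the theory of localized potentials for the elasticity system provides loads $g_n$ for which the background energy weighted by the nonzero $C^\bullet$ blows up on $O$, while all three quadratic quantities of $u_0^{g_n}$ remain bounded on $D$. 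As $C^\lambda+C^\mu+C^\rho>0$, the subtracted integral tends to $+\infty$ and the first stays bounded, forcing $\langle g_n,M(B)g_n\rangle\to-\infty$, so $M(B)\not\succeq0$.

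The step I expect to be the genuine obstacle is this localized-potentials construction. One must drive to infinity the specific background energy singled out by the nonzero coefficient---in particular, when $C^\rho>0$, the zeroth-order quantity $\int_O\Vert u_0^{g_n}\Vert^2$, which is not controlled by gradient concentration alone---while simultaneously keeping $\int_D\Vert\nabla\cdot u_0^{g_n}\Vert^2$, $\int_D\Vert\nabla^s u_0^{g_n}\Vert_F^2$ and $\int_D\Vert u_0^{g_n}\Vert^2$ bounded. This separation rests on $O\cap\overline{D}=\emptyset$ and on the connectedness of $\Omega\setminus D$ to the boundary, and is the technical heart of the argument, the relevant localized-potentials statement being the one from \cite{meftahi2025stability}. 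By contrast, the reduction to $(\star)$ and the bracketing bounds are routine consequences of \eqref{8}, Lemma~\ref{mono_loewn}, and the identity inside the proof of Lemma~\ref{mono_bis}.
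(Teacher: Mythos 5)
Your proposal is correct and follows the route the paper itself intends: the paper offers no proof of Theorem~\ref{mono_test} beyond citing the analogous Corollaries~2--3 of \cite{eberle2022monotonicity}, and your argument --- reducing the operator inequality to the scalar bracketing $(\star)$ via \eqref{8}, proving the ``if'' direction from the $\tfrac{\lambda_0}{\lambda_1}(\lambda_1-\lambda_0)$-weighted lower bound extracted from the completed-square identity inside the proof of Lemma~\ref{mono_bis} (correctly phrased in terms of the background solution $u_0^g$; you are right that the lemma's final displayed bound, which carries $u_1$ where the preceding line has $u_2$, cannot be used as stated), and the ``only if'' direction by contraposition plus localized potentials --- is precisely that argument. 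The one ingredient you defer to the literature, namely loads $g_n$ that blow up the energy term selected by the nonzero $C^{\bullet}$ on $O\subseteq B\setminus\overline{D}$ (including the zeroth-order quantity $\int_O\Vert u_0^{g_n}\Vert^2\,dx$ when only $C^{\rho}>0$) while keeping all three background energies bounded on $D$, is indeed the technical core, and it is exactly what \cite{eberle2022monotonicity} and \cite{meftahi2025stability} supply; the only point worth tightening is that $B\setminus\overline{D}$ may be empty for an open $B\not\subseteq D$, so the contrapositive step should be formulated through the outer support of the perturbation (using the standing assumption that the complement of $D$ is connected to $\partial\Omega$), as in those references.
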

\begin{theorem}
\label{mono_test_noise}
Let $\lambda_0,  \lambda_1, \mu_0, \mu_1, \rho_0, \rho_1\in \mathbb{R}_+$  
with  $ \lambda_1 \geq \lambda_0, \ \mu_1 \geq \mu_0, \ \rho _1 \geq \rho_0$  and assume that 
\[
\lambda=\lambda_0+(\lambda_1-\lambda_0)\chi_D, \quad  \mu=\mu_0+(\mu_1-\mu_0)\chi_D,\quad 
\rho=\rho_0+(\rho_1-\rho_0)\chi_D.
\]
Further on let $C^{\lambda}, C^{\mu}, C^{\rho}\in \mathbb{R}_+$ and   $C^{\lambda}+ C^{\mu}+ C^{\rho}>0$,  with 
\[
C^{\lambda}\leq \frac{\lambda_0}{\lambda_1}(\lambda_1-\lambda_0),\quad
C^{\mu}\leq \frac{\mu_0}{\mu_1}(\mu_1-\mu_0),\quad
C^{\rho}\leq \frac{\rho_0}{\rho_1}(\rho_1-\rho_0).
\]
Let  $\Lambda^{\delta}$ be the Neumann-to-Dirichlet operator   for noisy difference measurements with noise level
 $\delta>0$.

 Then, for every open set  $B$   there exists a noise level  $\bar \delta$ such that for all $0<\delta<\bar \delta$
\[
\begin{aligned}
&B\subseteq D \text{ if and only if }\\
&\Lambda(\lambda_0,\mu_0,\rho_0)-\Lambda^{\delta}(\lambda,\mu,\rho)+    \Lambda'(\lambda_0,\mu_0,\rho_0)(C^{\lambda}\chi_B, C^{\mu}\chi_B,
C^{\rho}\chi_B )+\delta I\geq 0.
\end{aligned}
\]
\end{theorem}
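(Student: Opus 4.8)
The plan is to reduce the noisy equivalence to the exact monotonicity test of Theorem~\ref{mono_test}, reading the added term $\delta I$ as an exact compensation for the measurement error. First I would make explicit the noise model carried by the symbol $\Lambda^\delta$, namely that
\[
\|\Lambda^\delta(\lambda,\mu,\rho)-\Lambda(\lambda,\mu,\rho)\|_{\mathcal L(L^2(\Gamma_N)^d)}\le\delta,
\]
and abbreviate the exact and the noisy test operators by
\[
T_B:=\Lambda(\lambda_0,\mu_0,\rho_0)-\Lambda(\lambda,\mu,\rho)+\Lambda'(\lambda_0,\mu_0,\rho_0)(C^{\lambda}\chi_B,C^{\mu}\chi_B,C^{\rho}\chi_B),
\]
\[
T_B^\delta:=\Lambda(\lambda_0,\mu_0,\rho_0)-\Lambda^\delta(\lambda,\mu,\rho)+\Lambda'(\lambda_0,\mu_0,\rho_0)(C^{\lambda}\chi_B,C^{\mu}\chi_B,C^{\rho}\chi_B)+\delta I.
\]
Then $T_B^\delta=T_B+E$ with $E:=\delta I-\big(\Lambda^\delta(\lambda,\mu,\rho)-\Lambda(\lambda,\mu,\rho)\big)$. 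The one structural observation I would extract from the noise bound is that $E$ is positive semidefinite with small norm: since $-\delta I\preceq\Lambda^\delta(\lambda,\mu,\rho)-\Lambda(\lambda,\mu,\rho)\preceq\delta I$, we get $0\preceq E$ and $\|E\|\le 2\delta$. The choice of sign in $+\delta I$ is exactly what makes $E\succeq0$, and this is the engine of the proof.

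For the forward implication I would show it holds for every $\delta>0$, so that no threshold is needed there. If $B\subseteq D$, Theorem~\ref{mono_test} gives $T_B\succeq0$; adding $E\succeq0$ yields $T_B^\delta=T_B+E\succeq0$. The entire role of $\bar\delta$ is therefore confined to the reverse implication, which I would treat by contraposition. If $B\not\subseteq D$, Theorem~\ref{mono_test} gives $T_B\not\succeq0$, so there is a unit vector $g_0\in L^2(\Gamma_N)^d$ with $\langle g_0,T_B g_0\rangle=-a$ for some fixed $a>0$. Setting $\bar\delta:=a/4$, for every $0<\delta<\bar\delta$ I estimate
\[
\langle g_0,T_B^\delta g_0\rangle=\langle g_0,T_B g_0\rangle+\langle g_0,E g_0\rangle\le -a+\|E\|\le -a+2\delta<-\tfrac{a}{2}<0,
\]
so $T_B^\delta\not\succeq0$. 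Contraposition gives $T_B^\delta\succeq0\Rightarrow B\subseteq D$, and combining the two directions closes the equivalence on $(0,\bar\delta)$.

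The delicate point, and the only place where something must be pinned down quantitatively, is the passage from the qualitative ``$T_B\not\succeq0$'' to a fixed negative margin $a>0$ that a $2\delta$ perturbation cannot overturn. Strictly speaking this is immediate: ``$T_B\not\succeq0$'' means precisely that some unit $g_0$ makes $\langle g_0,T_B g_0\rangle<0$, and one simply freezes $a:=-\langle g_0,T_B g_0\rangle$. If one prefers an intrinsic constant, I would record that $T_B$ is self-adjoint and compact (a finite combination of the compact self-adjoint operators $\Lambda$ together with the derivative $\Lambda'$), so that its least spectral value is attained and equals a strictly negative eigenvalue, and take $a:=-\lambda_{\min}(T_B)$ with $g_0$ a corresponding eigenfunction. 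Either way this makes transparent why $\bar\delta=\bar\delta(B)$ cannot be chosen uniformly in $B$: the tolerated noise scales with the negativity margin of $T_B$, which degenerates as $B$ is enlarged towards $D$.
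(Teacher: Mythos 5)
Your argument is correct, and it is essentially the standard one: the paper itself gives no proof of Theorem~\ref{mono_test_noise}, deferring to Corollaries 2 and 3 of the cited work of Eberle--Harrach, whose proof proceeds exactly as you do --- the $+\delta I$ shift absorbs the (self-adjoint, norm-$\delta$) measurement error to preserve positivity when $B\subseteq D$, while for $B\not\subseteq D$ the strict negativity margin $a$ of the noiseless test operator $T_B$ from Theorem~\ref{mono_test} fixes the threshold $\bar\delta$. Your closing remark that $\bar\delta$ necessarily depends on $B$ is a correct and worthwhile observation.
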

The proofs of Theorems~\ref{mono_test} and~\ref{mono_test_noise} follow by arguments analogous to those used in Corollaries 2 and 3 of~\cite{eberle2022monotonicity}.
\subsection{Numerical results using the linearized monotonicity test}\label{sec_mono_test}
In this subsection, we assume that the parameters $\lambda_0, \lambda_1, \mu_0,\mu_1, \rho_0$ and $\rho_1$
are known and we aim to reconstruct only the shape $D$.
 
We denote  by $\bar T\in \mathbb{R}^{m\times m}$ the discrete version of an operator $T\in \mathcal L (L^2( \Gamma_N)^d)$, i.e., its Galerkin projection to the span of $g_1,\ldots,g_m$.

 Following Theorem \ref{mono_test}, for each test ball $B$, we computed the eigenvalues of
\[
\overline \Lambda(\lambda_0,\mu_0,\rho_0)- \overline\Lambda(\lambda,\mu,\rho)+ \overline{\Lambda'}(\lambda_0,\mu_0,\rho_o)(C^{\lambda}\chi_B, C^{\mu}\chi_B,C^{\rho}\chi_B).
\]
We also considered the noisy data case 
\[
\overline\Lambda^\delta:=\overline \Lambda(\lambda,\mu,\rho)- \overline\Lambda(\lambda_0,\mu_0,\rho_0) + \delta \Vert \overline \Lambda(\lambda,\mu,\rho)- \overline\Lambda(\lambda_0,\mu_0,\rho_0)\Vert_F
\frac{E}{\Vert E\Vert_F},
\]
where the entries of $E\in \mathbb{R}^{m\times m}$ are normally distributed random variables with a mean of zero and a standard deviation of one.  
In the noisy data case, we mark those test balls for which all eigenvalues of 
\[
 \overline{\Lambda'}(\lambda_0,\mu_0,\rho_o)(C^{\lambda}\chi_B, C^{\mu}\chi_B,C^{\rho}\chi_B) -\overline\Lambda^\delta+ \delta I
\]
are positive.

 We summarize the above results in following  steps:
\texttt{
\begin{itemize}
\item[(i)] For each  test ball $B$ :
\begin{itemize}
\item[a.] for noiseless data, compute the eigenvalues of 
\begin{equation}\label{test_int1}
\overline \Lambda(\lambda_0,\mu_0,\rho_0)- \overline\Lambda(\lambda,\mu,\rho)+ \overline{\Lambda'}(\lambda_0,\mu_0,\rho_o)(C^{\lambda}\chi_B, C^{\mu}\chi_B,C^{\rho}\chi_B).
\end{equation}
\item[b.] for noisy data, compute the eigenvalues of 
\begin{equation}\label{test_int2}
 \overline{\Lambda'}(\lambda_0,\mu_0,\rho_o)(C^{\lambda}\chi_B, C^{\mu}\chi_B,C^{\rho}\chi_B) -\overline\Lambda^\delta+ \delta I.
\end{equation}
\end{itemize}
\item[(ii)]  If all the eigenvalues are positive, mark the ball $B$ as inside the domain $D$.
\end{itemize}
}
For the following numerical examples, we use a FEM mesh of the geometry $\Omega=[0,1]\times [0,1]$ with $5248$ elements.   We define the Neumann boundary 
\[ \Gamma_N=\{x=0\}\cup \{x=1\}\cup\{y=0\}, \]
as the union of patches $\Gamma_N^{(l)}$, for $l = 1, \dots, m$, which are assumed to be relatively open and connected, such that  
\[
\Gamma_N = \bigcup_{l=1}^{m} \Gamma_N^{(l)}, \quad \Gamma_N^{(i)} \cap \Gamma_N^{(j)} = \emptyset \text{ for } i \neq j.
\]  
We consider Neumann boundary data \( g_l \), for \( l = 1, \dots, m \), where each \( g_l \) is applied to a distinct patch \( \Gamma_N^{(l)} \). The value of \( g_l \) represents the incoming normal component on the corresponding patch.

In our setup, we have used 100 test balls and performed \( m = 19 \) measurements.  The test  parameters were set as $C^{\lambda}=C^{\mu}=C^{\rho}=0.5$.  The integrals appearing in equations \eqref{test_int1} and \eqref{test_int2} are computed using a MATLAB-based quadrature scheme.
 \begin{figure}[H]
\begin{center}
\subfloat{\includegraphics[scale=0.4]{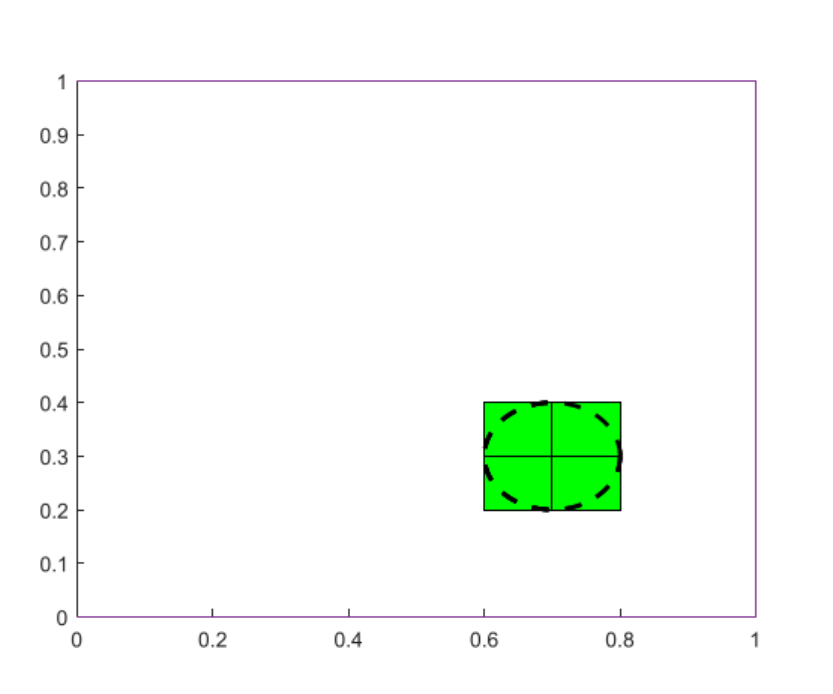}}
\subfloat{\includegraphics[scale=0.4]{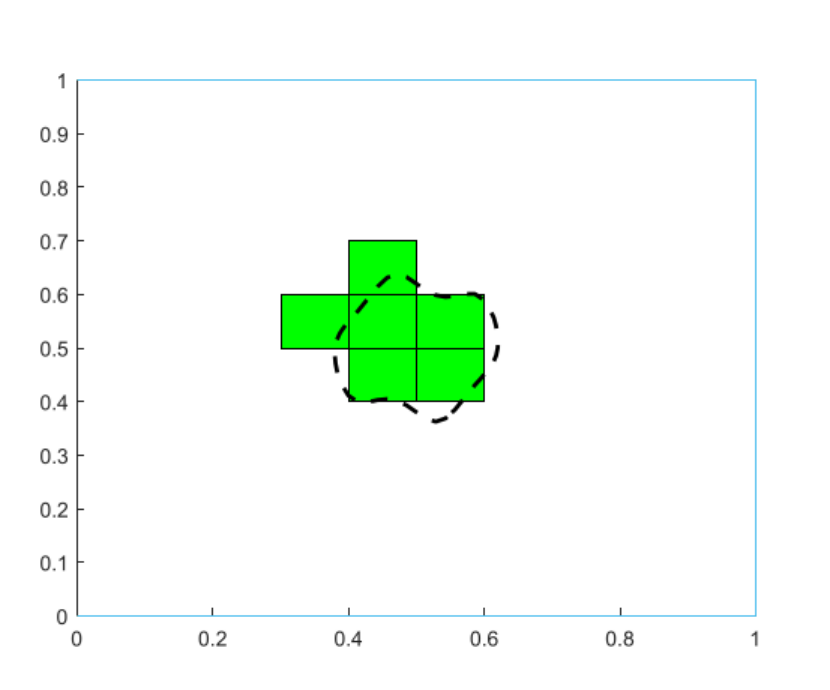}}
\subfloat{\includegraphics[scale=0.4]{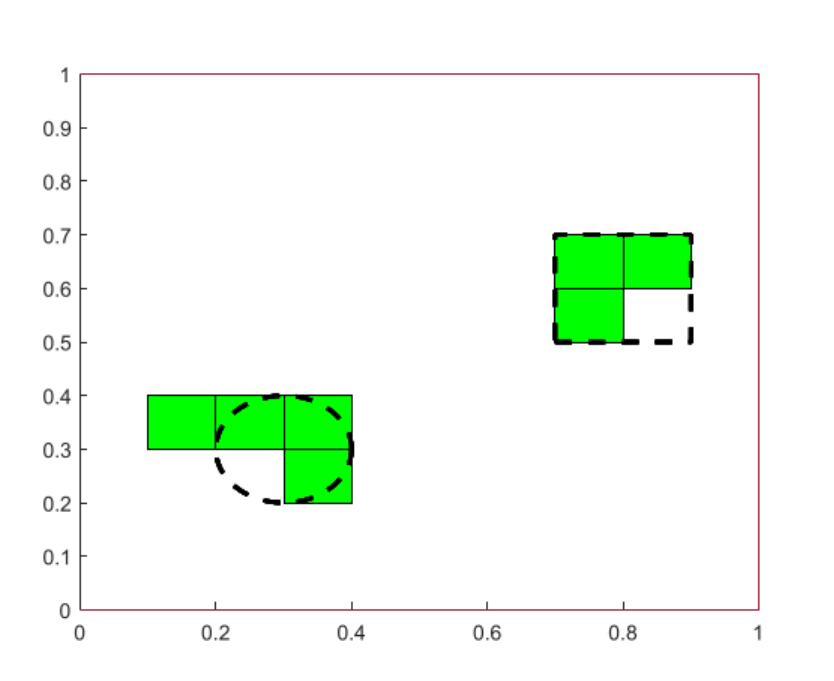}}\\
\subfloat{\includegraphics[scale=0.4]{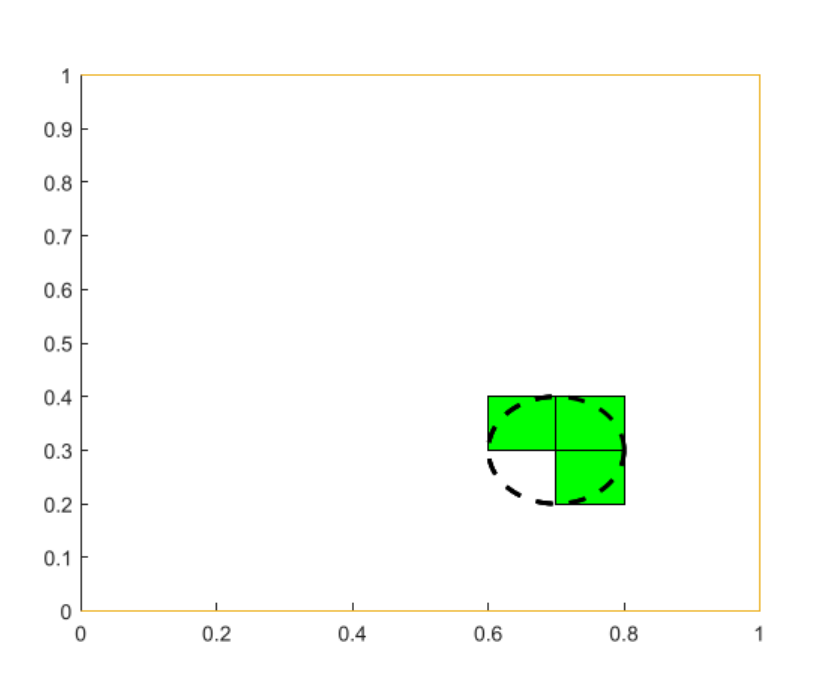}}
\subfloat{\includegraphics[scale=0.4]{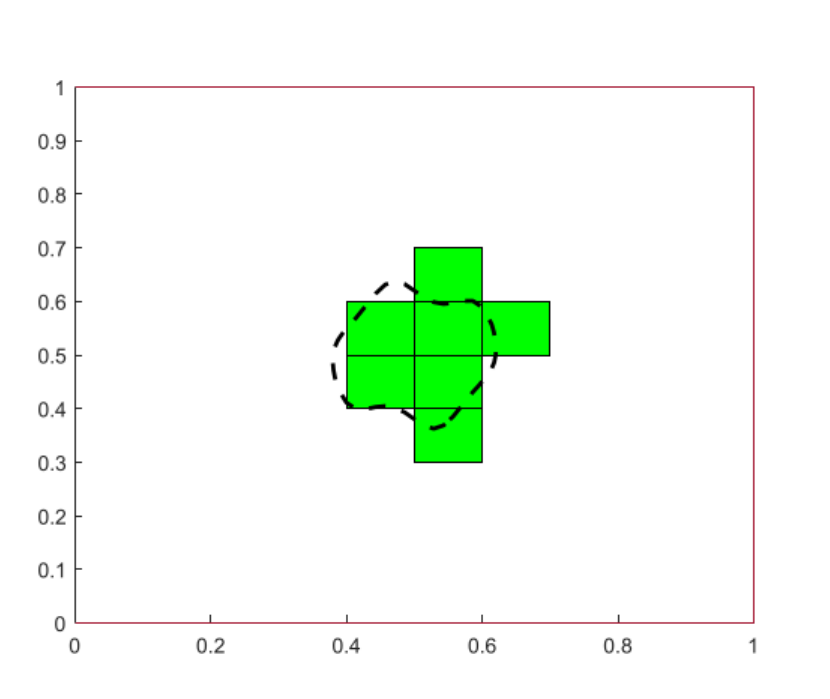}}
\subfloat{\includegraphics[scale=0.4]{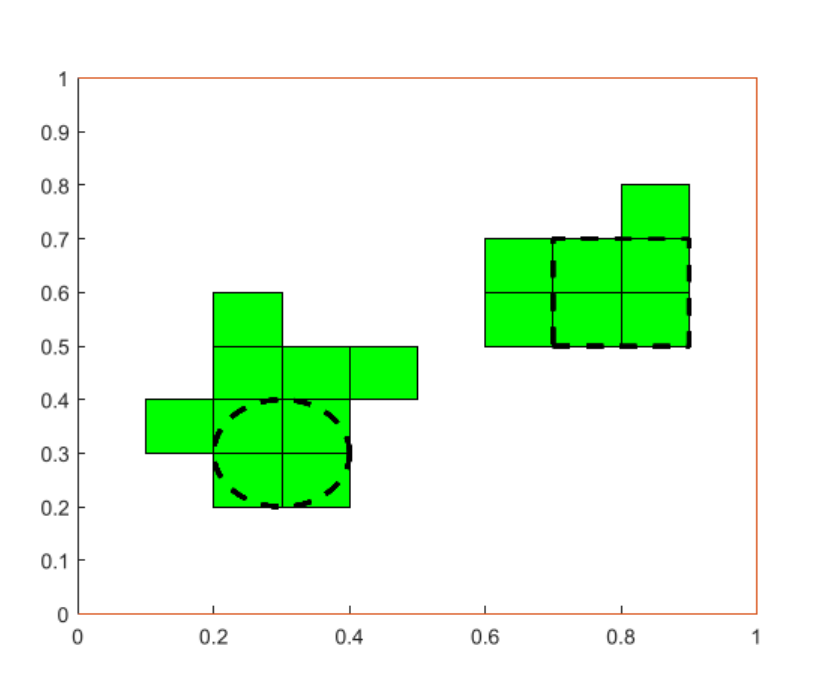}}
\caption{Reconstruction of the shape $D$ using Algorithm 1. The first row presents the reconstruction from noise-free data, while the second row shows the reconstruction from noisy data with a noise level of 
$\delta=0.001$ .}
\end{center}
\end{figure}
\section{Reconstruction subject to   monotonicity constraints}
The numerical experiments based on the linearized monotonicity test exhibit significant sensitivity to noise. To enhance the robustness of the reconstruction, we employ a regularization method incorporating monotonicity constraints, which will be detailed in the following subsections.
\subsection{Standard one-step linearization techniques}
In this subection we   use one-step linearization techniques   to reconstruct the  shape
$D= {\rm out}_{\partial\Omega} {\rm supp}(\lambda-\lambda_0,\mu-\mu_0,\rho-\rho_0)^T$.
  Moreprecisely,  we compare the matrix of the discretized Neumann-to-Dirichlet operator $\Lambda(\lambda, \mu, \rho)$ with $\Lambda(\lambda_0, \mu_0, \rho_0)$ for some reference parameters $(\lambda_0, \mu_0, \rho_0)$:
\begin{equation}\label{eq_linear}
 \Lambda(\lambda, \mu, \rho) - \Lambda(\lambda_0, \mu_0,\rho_0)\approx \Lambda'(\lambda_0, \mu_0,\rho_0)(\lambda-\lambda_0, \mu-\mu_0,\rho-\rho_0).
\end{equation}
where $\Lambda'(\lambda_0, \mu_0,\rho_0)$ is the Fr\'{e}chet derivative of the Neumann-to-Dirichlet operator $\Lambda$.\\

To recover numerically the shape $D$  from equation\eqref{eq_linear},  we discretize the domain $\Omega$  into $L$ disjoint pixels $B_k$, where each $B_k$ is assumed to be open and $\Omega \setminus B_k$ is connected and $B_k \cap B_i = \emptyset$  for $i \neq k.$  We make the piecewise constant ansatz for $(\alpha, \beta, \gamma)$ as:
\[
\alpha(x) = \sum_{k=1}^L \alpha_k \chi_{B_k}(x), \quad
\beta(x) = \sum_{k=1}^L \beta_k \chi_{B_k}(x), \quad
\gamma(x)  = \sum_{k=1}^L \gamma_k \chi_{B_k}(x),
\]
where $\chi_{B_k}$ is the characteristic function of the pixel $B_k$, and $\alpha, \beta$ and $\gamma$  approximate $(\lambda-\lambda_0, \mu-\mu_0,\rho-\rho_0)$. Let  $ g_1,\ldots, g_m\in L^2(\Gamma_N)^d$
  denote $m$ orthonormal boundary loads.  We denote  $T^{\lambda}_k,  T^{\mu}_k,  T^{\rho}_k$  and $U$ the matrices with entries 
\[
(T^{\lambda}_k)_{i,j}:=\int_{B_k}\nabla \cdot u^{g_i}_0 \nabla \cdot u^{g_j}_0 \,dx,  \quad
(T^{\mu}_k)_{i,j}:= 2\int_{B_k} \nabla  u^{g_i}_0 :\nabla  u^{g_j}_0 \,dx,\quad
(T^{\rho}_k)_{i,j}:=\int_{B_k}   u^{g_i}_0 \cdot u^{g_j}_0 \,dx,
\]
\[
U_{i,j}=\int_{\Gamma_N}g_i\cdot (u^{g_j}_0- u^{g_j})\,ds,
\]
where  $u^{g_k}_0$ resp. $u^{g_k}$   is the solution to the boundary value problem \eqref{direct2} with respect to the parameters $\lambda_0, \mu_0, \rho_0$  resp.  the parameters $\lambda, \mu, \rho$ and  the applied force $g_k$.

This approach yields the linear equation system:
\begin{equation}\label{mat_sys}
\sum_{k=1}^L \alpha_k T^{\lambda}_k+\beta_k T^{\mu}_k+\gamma_k T^{\rho}_k=U.
\end{equation}
We can  rewrite \eqref{mat_sys} in the following  vectorial form

\begin{equation}\label{vec_sys}
 \mathcal{A}\alpha+ \mathcal{B} \beta + \mathcal{C}\gamma= \mathcal{D}.
\end{equation}
where
\[
\mathcal{A}= (\mathcal{A}_{i,j})\in \mathbb{R}^{m^2\times L},\quad \mathcal{B}= (\mathcal{B}_{i,j})\in \mathbb{R}^{m^2\times L}, \quad\mathcal{C}= (\mathcal{C}_{i,j})\in \mathbb{R}^{m^2\times L},    \quad \mathcal{D} =(\mathcal{D}_i)^{m^2}_{i=1}\in \mathbb{R}^{m^2}, 
\]
with
\[
\mathcal{A}_{(i-1)m+j,k}=(T^{\lambda}_k)_{i,j},\quad \mathcal{B}_{(i-1)m+j,k}=(T^{\mu}_k)_{i,j},\quad
\mathcal{C}_{(i-1)m+j,k}=(T^{\rho}_k)_{i,j},\quad \mathcal{D}_{(i-1)m+j}= U_{i,j}.
\]
To estimate the parameters $\alpha, \beta, \gamma$, a standard approach is to consider the following minimization problems:
\begin{equation}\label{min_prob}
\left\Vert \mathcal{A} \,|\, \mathcal{B} \,|\, \mathcal{C}
\begin{pmatrix} \alpha \\ \beta \\ \gamma \end{pmatrix} - \mathcal{D} \right\Vert^2_2 \rightarrow \min !
\end{equation}
and
\begin{equation}\label{min_fro}
\left\Vert R(\alpha,\beta,\gamma) \right\Vert^2_F \rightarrow \min !
\end{equation}
where
\[
R(\alpha,\beta,\gamma) := \sum_{k=1}^L \alpha_k T^{\lambda}_k + \beta_k T^{\mu}_k + \gamma_k T^{\rho}_k - U.
\]
The notation $\|\cdot\|_F$ denotes the Frobenius norm.

However, the above  minimization problems  are  ill-posed, and therefore a regularization strategy is required to obtain a stable and meaningful solution.
In the following, we introduce various regularization techniques to address the minimization problem \eqref{min_fro}.
\subsection{Regularization via monotonicity constraints}
  To compute a stable   resolution of problem  \eqref{min_fro},  we enforce a monotonicity constraint  by imposing    a box constraints  on the parameters   as follows:
 we assume first   that  the variations are bounded
\[
  \lambda_{\min}\leq  \lambda_1-\lambda_0\leq \lambda_{\max},\quad   \mu_{\min}\leq  \mu_1-\mu_0\leq \mu_{\max},\quad
 \rho_{\min}\leq  \rho_1-\rho_0\leq \rho_{\max},
\]
 Let  
\[
 a_{\max} =  \lambda_0-\frac{\lambda_0^2}{\lambda_0+\lambda_{\min}},\quad    b_{\max} =  \mu_0-\frac{\mu_0^2}{\mu_0+\mu_{\min}}, \quad c_{\max} =  \rho_0-\frac{\rho_0^2}{\rho_0+\rho_{\min}},
\]
and define 
\[
\tau_1= \frac{b_{\max}}{a_{\max}}, \quad   \tau_2= \frac{c_{\max}}{a_{\max}}. 
\]
We   have 
\[
 \lambda_0-\frac{\lambda_0^2}{\lambda_1}\geq a, \quad
 \mu_0-\frac{\mu_0^2}{\mu_1}\geq \tau_1 a,  \quad
\rho_0-\frac{\rho_0^2}{\rho_1} \geq \tau_2 a,  \quad \forall~ 0\leq a\leq a_{\max}.
\]
To enforce the monotonicity constraint, we  rewrite the minimization problem  as follows
\begin{equation}\label{min_prob}
 \min_{\zeta \in \mathcal{C}}\Vert R(\zeta,\tau_1\zeta,\tau_2\zeta) \Vert^2_F,
\end{equation}
where the set of admissible parameters is given by:
\[
\mathcal{C}:=\left\{  \zeta \in L^{\infty}(\Omega):  \zeta=\sum_{k=1}^L \zeta_k\chi_{B_k},  \quad  \zeta_k\in \mathbb{R}, \quad 0\leq\zeta_k\leq \min(a_{\max},\beta_k)\right\},
\]
with  
\[
\beta_k:=\max\{ a>0:  \quad  U \geq  aT^{\lambda}_k+\tau_1a T^{\mu}_k+ \tau_2a T^{\rho}_k \}.
\]
According to \cite{eberle2022monotonicity}, one can demonstrate that
\[
\beta_k =  - \frac{1}{\Theta_{\min}\left (L^{-1} T_k (L^*)^{-1}\right)},
\]
where  $\Theta_{\min}(.)$ compute the most negative value,  $ L L^*$
 is the Cholesky factorization of the  matrix $U$  and  $T_k=  T^{\lambda}_k+\tau_1 T^{\mu}_k+ \tau_2 T^{\rho}_k$.
\begin{theorem}\label{thm_min_prob}
The   minimization problem  \eqref{min_prob} has a    solution $ \hat\zeta \in \mathcal{C}$. 
\end{theorem}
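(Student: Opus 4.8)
The plan is to recognize that, despite being phrased over the infinite-dimensional space $L^\infty(\Omega)$, the problem \eqref{min_prob} is genuinely a finite-dimensional constrained optimization, to which the Weierstrass extreme value theorem applies directly. First I would note that, since the pixels $B_k$ are disjoint with positive measure, the characteristic functions $\chi_{B_1},\ldots,\chi_{B_L}$ are linearly independent, so every admissible $\zeta=\sum_{k=1}^L \zeta_k\chi_{B_k}$ is uniquely identified with its coefficient vector $(\zeta_1,\ldots,\zeta_L)\in\mathbb{R}^L$. Under this identification the feasible set $\mathcal{C}$ corresponds to the box $\prod_{k=1}^L\big[0,\min(a_{\max},\beta_k)\big]\subset\mathbb{R}^L$.

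Next I would rewrite the cost in terms of this coefficient vector. Using $T_k=T^{\lambda}_k+\tau_1 T^{\mu}_k+\tau_2 T^{\rho}_k$, the definition of $R$ gives $R(\zeta,\tau_1\zeta,\tau_2\zeta)=\sum_{k=1}^L \zeta_k T_k - U$, so the objective reads
\[
f(\zeta)=\Big\Vert \sum_{k=1}^L \zeta_k T_k - U \Big\Vert_F^2,
\]
which is a quadratic polynomial in $(\zeta_1,\ldots,\zeta_L)$, hence continuous (indeed smooth and convex) on all of $\mathbb{R}^L$.

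I would then verify the two hypotheses of the Weierstrass theorem. For compactness, each upper bound obeys $0\le \min(a_{\max},\beta_k)\le a_{\max}<\infty$, so the box is bounded; being a finite product of closed intervals it is closed, hence $\mathcal{C}$ is compact. For nonemptiness, one checks $a_{\max}=\lambda_0\lambda_{\min}/(\lambda_0+\lambda_{\min})\ge 0$, while $\beta_k\ge 0$ because the defining set $\{a>0:\ U\ge aT_k\}$ contains arbitrarily small $a$ (here $U\succeq 0$ by the monotonicity of $\Lambda$ established in Lemma~\ref{mono_loewn} and its corollary, and $T_k\succeq 0$ as a Gram-type matrix). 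Thus each interval $[0,\min(a_{\max},\beta_k)]$ contains $0$, so $\zeta=0\in\mathcal{C}$.

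Finally, a continuous function on a nonempty compact set attains its infimum, yielding a minimizer $\hat\zeta\in\mathcal{C}$, which is the asserted solution. The only point requiring genuine (though elementary) care is confirming that the upper bounds $\min(a_{\max},\beta_k)$ are finite and nonnegative, so that $\mathcal{C}$ is simultaneously nonempty and bounded; once this is settled the existence is immediate. I do not expect a substantive obstacle here, since the feasible set is explicitly a box and the objective is a finite quadratic. As an alternative route one may instead invoke convexity: $f$ is convex and $\mathcal{C}$ is a compact convex polytope, so a global minimizer exists and any stationary (KKT) point is optimal.
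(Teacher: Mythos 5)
Your proof is correct and follows essentially the same route as the paper: identify $\mathcal{C}$ with a compact box in $\mathbb{R}^L$, observe that the objective is a continuous quadratic in the coefficients, and invoke the Weierstrass extreme value theorem. The only addition is your explicit check that $\mathcal{C}$ is nonempty (via $0\in\mathcal{C}$), a point the paper's proof leaves implicit but which is a worthwhile clarification rather than a different argument.
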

\begin{proof}
The objective function
\[
J(\zeta) := \left\| \sum_{k=1}^L \zeta_k T_k - U \right\|_F^2
\]
is continuous with respect the finite-dimensional vector \( \zeta \in \mathbb{R}^L \), since it is a quadratic function.

The admissible set \( \mathcal{C} \subset \mathbb{R}^L \) is defined by box constraints on each coefficient \( \zeta_k \) is  compact.
 Hence, the minimization problem admits at least one solution \( \hat{\zeta} \in \mathcal{C} \).

\end{proof}

For  noisy data $U^{\delta}$,  with
\[
\Vert U^\delta-U \Vert_F\leq \delta,
\]
we consider the minimization problem
\begin{equation}\label{min_prob_noise}
 \min_{\zeta \in \mathcal{C}^{\delta}}\Vert R^{\delta}(\zeta,\tau_1\zeta,\tau_2\zeta) \Vert^2_F,
\end{equation}
 where
\[
R^{\delta}(\zeta,\tau_1\zeta,\tau_2\zeta) := \sum_{k=1}^L \zeta_k T^{\lambda}_k+\tau_1\zeta T^{\mu}_k+\tau_2 \zeta T^{\rho}_k-U^{\delta},  
\]
with  
\[
\mathcal{C}^{\delta}:=\left\{  \zeta \in L^{\infty}(\Omega):  \zeta=\sum_{k=1}^L \zeta_k\chi_{B_k},  \quad  \zeta_k\in \mathbb{R}, \quad 0\leq\zeta_k\leq \min(a_{\max},\beta_k)\right\},
\]
and
\[
\beta^{\delta}_k:=\max\{ a>0:  \quad  U^{\delta}  +\delta I \geq  aT^{\lambda}_k+\tau_1a T^{\mu}_k+ \tau_2a T^{\rho}_k \}.
\]
Building on the results of \cite{eberle2022monotonicity}, we can establish that
\[
\beta^{\delta}_k =  - \frac{1}{\Theta_{\min}\left (L^{-1}_\delta T_k (L^*_\delta)^{-1}\right)},
\]
where   $ L_\delta L^*_{\delta}$
 is the Cholesky factorization of the  matrix $U^\delta+\delta I$.
\begin{theorem}\label{thm_min_prob_noise}
The minimization problem \eqref{min_prob_noise} admits a solution \( \hat\zeta^{\delta} \in \mathcal{C}^{\delta} \).
Let  $\hat \zeta^{\delta}$
be a minimizer of problem \eqref{min_prob_noise}. Then, the following convergence result holds:
\[
\hat \zeta^{\delta} \to \hat \zeta \quad \text{as } \delta \to 0,
\]
and  \( \hat \zeta \)   is a   minimizer of problem \eqref{min_prob}.

\end{theorem}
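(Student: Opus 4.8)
The plan is to handle the two assertions in turn: first the existence of a noisy minimizer, then the stability as $\delta \to 0$. For existence I would simply repeat the reasoning of Theorem~\ref{thm_min_prob}. Writing $T_k := T^{\lambda}_k + \tau_1 T^{\mu}_k + \tau_2 T^{\rho}_k$, the reduced objective $J^{\delta}(\zeta) := \|\sum_{k=1}^L \zeta_k T_k - U^{\delta}\|_F^2$ is a continuous (indeed quadratic) function of the finite-dimensional vector $\zeta \in \mathbb{R}^L$, while $\mathcal{C}^{\delta}$ is a product of closed bounded intervals $[0,\min(a_{\max},\beta_k^{\delta})]$ and hence compact. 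Weierstrass' theorem then yields a minimizer $\hat\zeta^{\delta} \in \mathcal{C}^{\delta}$.

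The convergence statement is the substance, and I would cast it as a standard compactness-plus-recovery-sequence argument. The first step is to record that the box bounds are stable under the perturbation: since $\|U^{\delta}-U\|_F \le \delta$ we have $U^{\delta}+\delta I \to U$ as $\delta \to 0$, so the Cholesky factors obey $L_{\delta}\to L$, and since $\Theta_{\min}\bigl(L_{\delta}^{-1} T_k (L_{\delta}^{*})^{-1}\bigr)$ depends continuously on $L_{\delta}$, the bounds converge, $\beta_k^{\delta}\to\beta_k$ for every $k$. In particular the upper bounds $\min(a_{\max},\beta_k^{\delta})$ stay uniformly bounded for small $\delta$, so the family $\{\hat\zeta^{\delta}\}$ is bounded in $\mathbb{R}^L$, and by Bolzano--Weierstrass any sequence $\delta_n \to 0$ admits a subsequence along which $\hat\zeta^{\delta_n} \to \hat\zeta$ for some $\hat\zeta \in \mathbb{R}^L$.

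It then remains to identify $\hat\zeta$ as a minimizer of the noiseless problem. Admissibility follows by letting $n \to \infty$ in $0 \le \hat\zeta_k^{\delta_n} \le \min(a_{\max},\beta_k^{\delta_n})$ together with $\beta_k^{\delta_n}\to\beta_k$, which gives $\hat\zeta \in \mathcal{C}$. For optimality I would fix an arbitrary competitor $\zeta \in \mathcal{C}$ and build a recovery sequence by clipping, $\zeta_k^{\delta_n} := \min\bigl(\zeta_k,\,\min(a_{\max},\beta_k^{\delta_n})\bigr)$, so that $\zeta^{\delta_n}\in\mathcal{C}^{\delta_n}$ and $\zeta^{\delta_n}\to\zeta$ (the clip is active only by an amount tending to zero, since $\zeta_k \le \min(a_{\max},\beta_k)$ and the bounds converge). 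The minimality of $\hat\zeta^{\delta_n}$ gives $J^{\delta_n}(\hat\zeta^{\delta_n}) \le J^{\delta_n}(\zeta^{\delta_n})$; since the elementary bound $|J^{\delta}(\eta)-J(\eta)| \le 2\|\sum_k \eta_k T_k - U\|_F\,\delta + \delta^2$ shows $J^{\delta}\to J$ uniformly on bounded sets while $J$ is continuous, passing to the limit yields $J(\hat\zeta)\le J(\zeta)$. As $\zeta\in\mathcal{C}$ was arbitrary, $\hat\zeta$ minimizes \eqref{min_prob}; if that minimizer is unique (for instance when the $T_k$ are linearly independent, so $J$ is strictly convex) a standard sub-subsequence argument upgrades the subsequential convergence to the full statement $\hat\zeta^{\delta}\to\hat\zeta$.

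The main obstacle I anticipate is the stability of the admissible sets rather than the optimality bookkeeping: justifying $\beta_k^{\delta}\to\beta_k$ and the recovery sequence is where the $\delta$-dependence enters nontrivially through the Cholesky factorization of $U^{\delta}+\delta I$, and it requires $U$ and $U^{\delta}+\delta I$ to be positive definite so that $L$, $L_{\delta}$, and $\Theta_{\min}(\cdot)$ are well defined and vary continuously with the data. Everything else is the routine compactness and lower-semicontinuity machinery.
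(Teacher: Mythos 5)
Your proposal is correct and follows essentially the same route as the paper: existence by Weierstrass on the compact box $\mathcal{C}^{\delta}$, then subsequential compactness of the minimizers, a recovery sequence in $\mathcal{C}^{\delta}$ for an arbitrary competitor in $\mathcal{C}$, and passage to the limit in $J_{\delta}(\hat\zeta^{\delta}) \le J_{\delta}(\tilde\zeta^{\delta})$. In fact you supply details the paper leaves implicit --- the convergence $\beta_k^{\delta}\to\beta_k$ via continuity of the Cholesky factorization, the explicit clipping construction of the recovery sequence, and the honest caveat that without uniqueness of the noiseless minimizer one only obtains convergence along subsequences, whereas the theorem as stated (and the paper's proof) asserts full-sequence convergence without addressing this point.
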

\begin{proof}
The existence of minimizers follows from the fact that $J_\delta$ is continuous and the set $\mathcal{C}^\delta$
is compact. 

Now, we prove the convergence result. Let  $\hat \zeta^\delta\in \mathcal{C}^{\delta}$   be a minimizer of $J_\delta$.
By compactness and Hausdorff convergence,  we can extract a  subsequence  $\hat \zeta^{\delta_k}$  such that    $\hat \zeta^{\delta_k} \to  \hat\zeta$, where    $ \hat \zeta\in \mathcal  C$.
For any $\tilde \zeta\in \mathcal{C}$, there exists  $\tilde \zeta^{\delta}\in \mathcal{C^\delta}$ such  that 
$\tilde \zeta^{\delta}\rightarrow\tilde \zeta$.  Since   $\hat\zeta^\delta$  is a minimizer of  $J_\delta$ over  $\mathcal{C}^{\delta}$,  we have 
\[
  J_\delta(\hat \zeta^{\delta})  \leq J_\delta(\tilde \zeta^{\delta}).
\]
Passing to the limit in the obove equation, we obtain
\[
J(\hat\zeta)\leq J(\tilde\zeta).
\]
This implies that   $\hat\zeta$  is a minimizer of $J$.
\end{proof}
In the following numerical experiments, we use the same data as in Section~\ref{sec_mono_test}. The optimal solution is computed using the CVX toolbox in MATLAB, which is designed for modeling and solving convex optimization problems efficiently.
\subsection{Numerical results using the monotonicity constraints}
Figure~\ref{fig_suppSTSVD} illustrates the reconstruction of the common support of the parameter variations using monotonicity constraints. The first row shows the reconstruction obtained from noise-free data, while the second row displays the results with a noise level of $10\%$. As observed, the reconstruction is more accurate when the support consists of a single connected subdomain, compared to the case where it is composed of two disjoint subdomains.
 \begin{figure}[H]
\begin{center}
\subfloat{\includegraphics[scale=0.4]{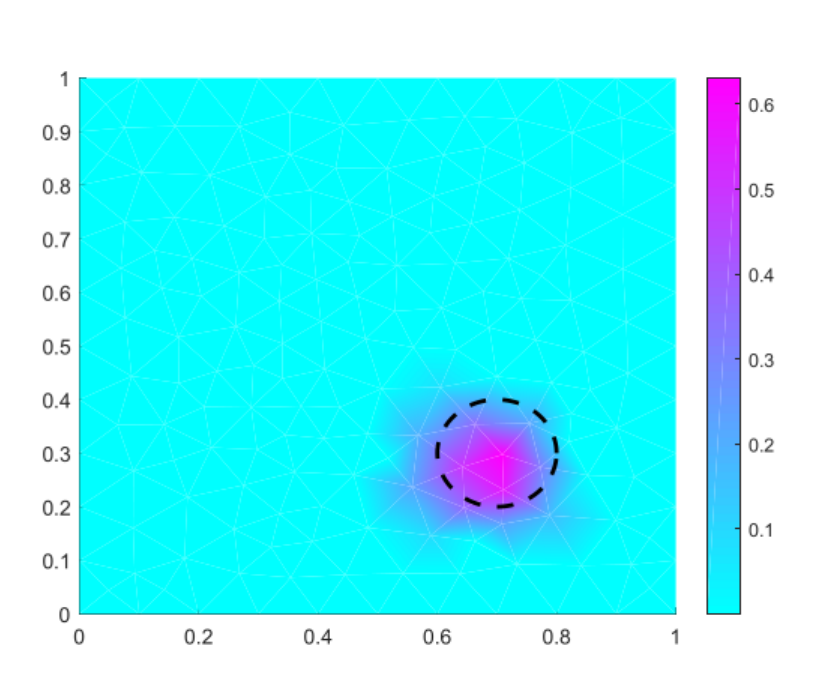}}
\subfloat{\includegraphics[scale=0.4]{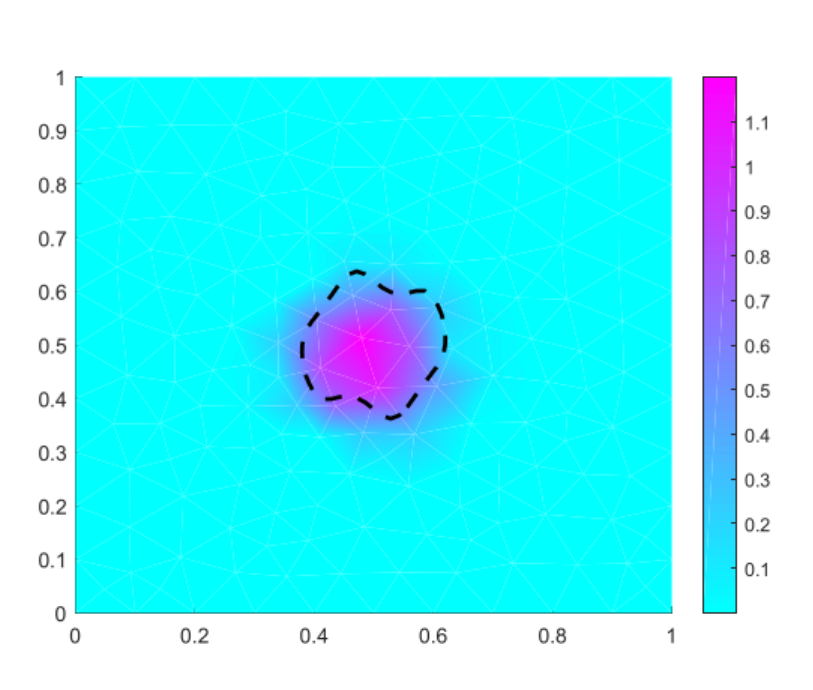}}
\subfloat{\includegraphics[scale=0.4]{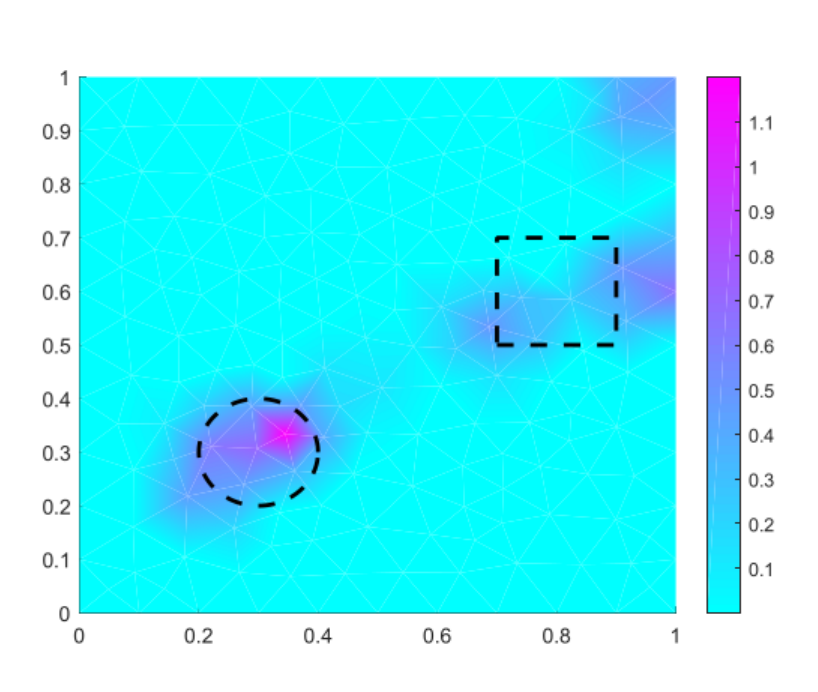}}\\
\subfloat{\includegraphics[scale=0.4]{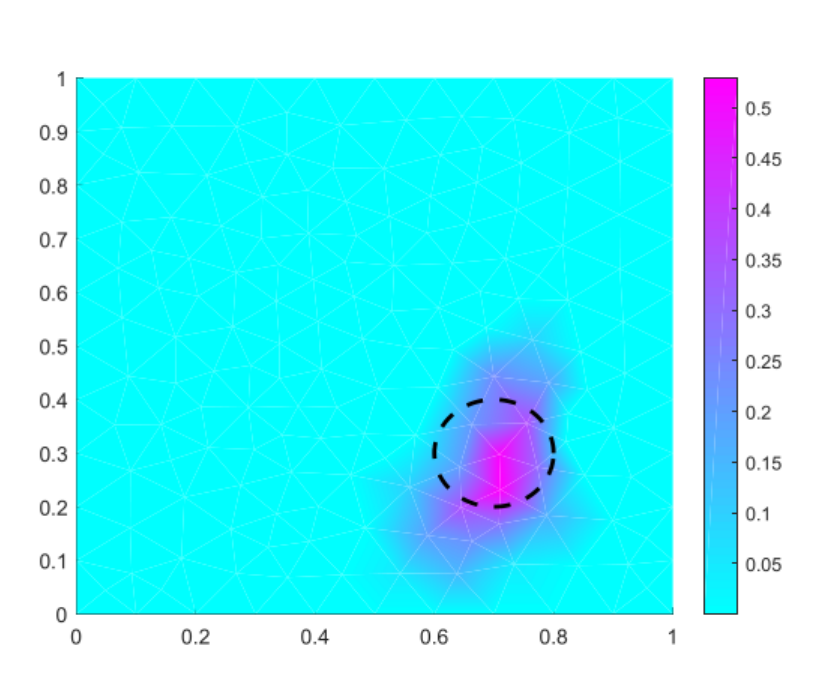}}
\subfloat{\includegraphics[scale=0.4]{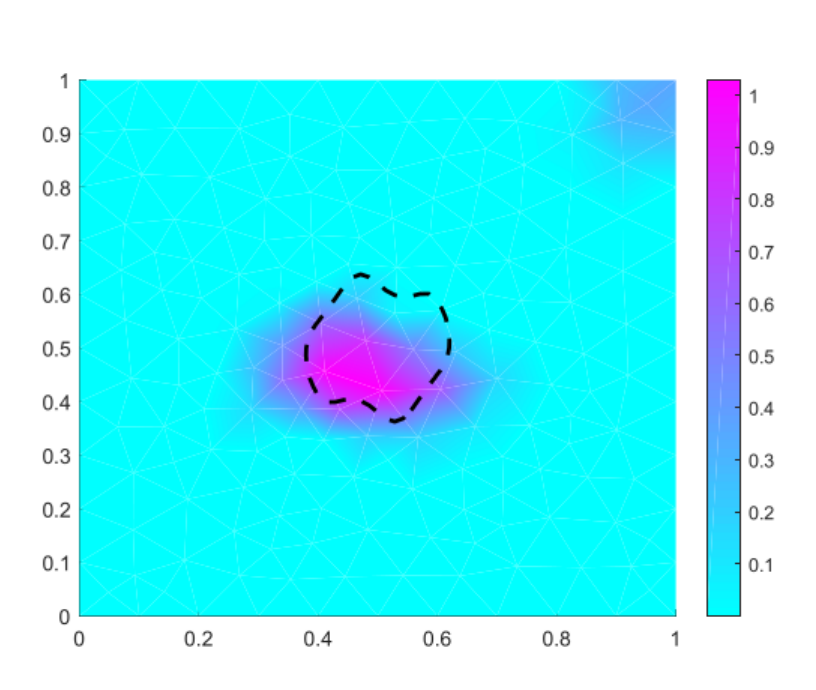}}
\subfloat{\includegraphics[scale=0.4]{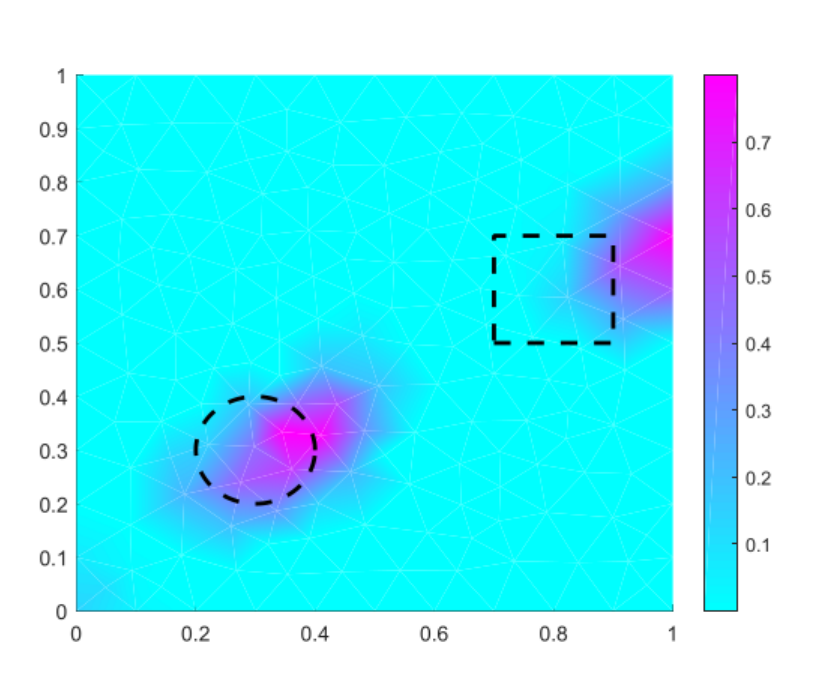}}
\caption{Reconstruction of the shape $D$ using the monotonicity constraints. The first row presents the reconstruction from noise-free data, while the second row shows the reconstruction from noisy data with a noise level of $\delta=0.1$ .} \label{fig_suppSTSVD}
\end{center}
\end{figure}

\section{Reconstruction with combined monotonicity and TSVD Regularization}
To reduce the numerical instability and enhance the numerical  reconstruction,  we use a reconstruction techniques 
based on a combination of the  monotonicity constraints method and the truncated singular value decomposition method(TSVD).

The singular value decomposition(SVD) of a matrix $A\in \mathbb{R}^{m\times n}$  is given by 
$A= \bold U\Sigma \bold V^T$,  with  $\bold U\in \mathbb{R}^{m\times m}$ and $\bold V\in \mathbb{R}^{m\times m}$ are orthogonal  matrices and $\Sigma= {\rm diag}(\sigma_1,\ldots \sigma_r)$ is a diagonal matrix containing the singular values $\sigma_1\geq \ldots\geq\sigma_r>0$ of $A$.

The \emph{Frobenius norm} of \( A \) can be expressed in terms of its singular values as:
\[
\|A\|_F^2 = \sum_{i=1}^{r} \sigma_i^2.
\]

To compute a low-rank approximation of \( A \), we seek the smallest integer \( k \) such that the partial sum of the  singular values captures a specified proportion of the total energy. This leads to the following criterion:
\[
\frac{\sum_{i=1}^{k} \sigma_i}{\sum_{i=1}^{r} \sigma_i} \geq \tau,
\]
where \( \tau \in (0,1) \) is a prescribed threshold.

The corresponding truncated SVD approximation of \( A \) is:
\[
A_l = U_k \Sigma_l V_l^T,
\]
where \( \Sigma_l = \mathrm{diag}(\sigma_1, \dots, \sigma_l) \), and \( U_l, V_l \) contain the first \( l \) columns of \( U \) and \( V \), respectively.

This approach is especially effective in eliminating small singular values associated with noise or ill-conditioning, thereby serving as a powerful tool for regularizing inverse problems.

In this subsection, rather than solving the minimization problem \eqref{min_prob}, we consider the following constrained TSVD  minimization problem
\begin{equation}\label{min_prob_trunc}
 \min_{\zeta \in \mathcal{C}}\Vert \tilde R(\zeta,\tau_1\zeta,\tau_2\zeta) \Vert^2_F,
\end{equation} 
where 
\[
\tilde R(\zeta) := \sum_{k=1}^L \alpha_k\tilde T^{\lambda}_k+\beta_k \tilde T^{\mu}_k+\gamma_k\tilde T^{\rho}_k-U,   
\]
with $[\tilde T^{\lambda}_k, \tilde T^{\mu}_k,   \tilde T^{\rho}_k]=  \tilde T_k$  where  
  $\tilde S_k$  is the truncation matrix of the matrix   $T_k=[T^{\lambda}_k,  T^{\mu}_k,    T^{\rho}_k]$.

For  noisy data, we solve  the following  constrained  minimization problem 
\begin{equation}\label{min_prob_trunc_noise}
 \min_{\zeta \in \mathcal{C}^{\delta}}\Vert \tilde R(\zeta,\tau_1\zeta,\tau_2\zeta) \Vert^2_F.
\end{equation} 
\subsection{Numerical results using TSVD  regularization}
Figure~\ref{TSVD_matrix} illustrates the exponential decay of the sensitivity matrix \( T_k \), the corresponding energy distribution, and the truncation point associated with the threshold \( \tau = 0.99 \).

Figure~\ref{combined_single_shape} presents the reconstruction of the common support of the parameter variations using the combined method and the TSVD regularization approach.

Figure~\ref{combined_multiple_shape} shows the reconstruction of disjoint supports of the parameter variations using the monotonicity method and the combined monotonicity--TSVD regularization method. The square indicates the support of \( \delta \), the circle denotes the support of \( \delta \rho \), and the ellipse represents the support of \( \delta \lambda \).

Compared to the monotonicity method alone, the combined approach yields a more accurate reconstruction.
 \begin{figure}[H]
\begin{center}
\includegraphics[scale=0.6]{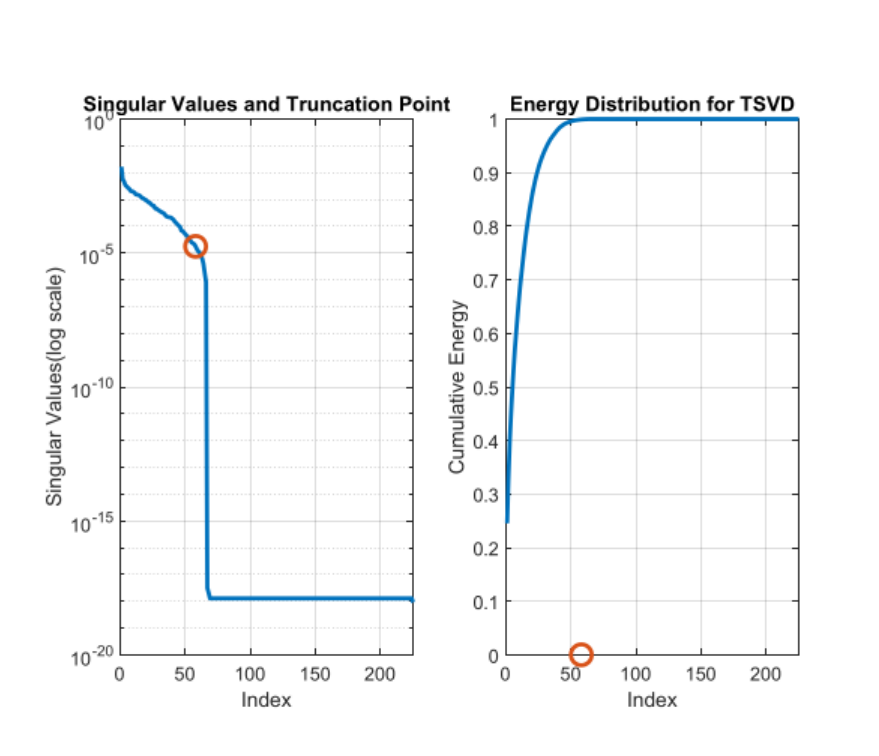}
\end{center}
\caption{Reconstruction of the truncation point (highlighted in red): on the left, the exponential decay of the singular values; on the right, the energy distribution for the TSVD.}\label{TSVD_matrix}
\end{figure}

  \begin{figure}[H]
\begin{center}
\subfloat{\includegraphics[scale=0.4]{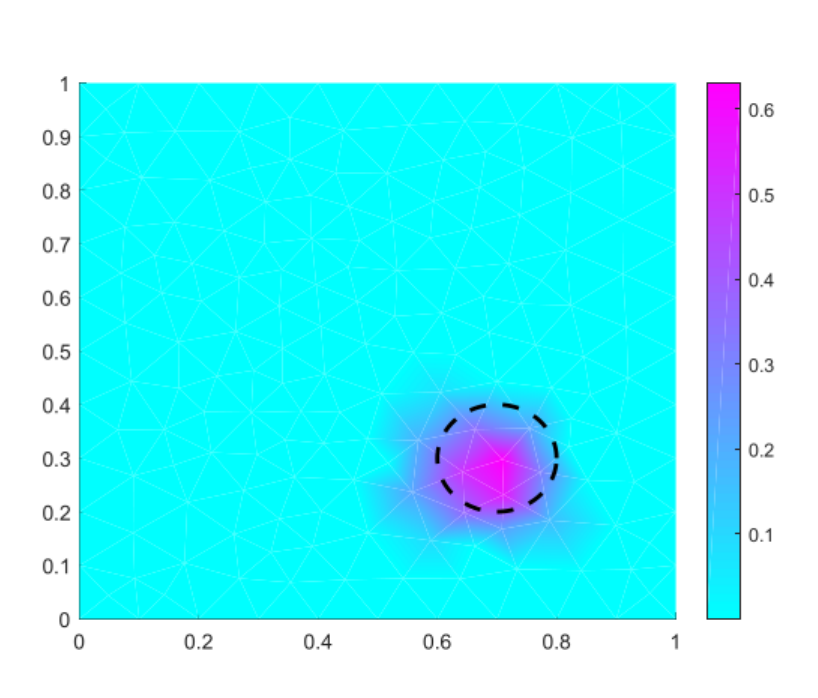}}
\subfloat{\includegraphics[scale=0.4]{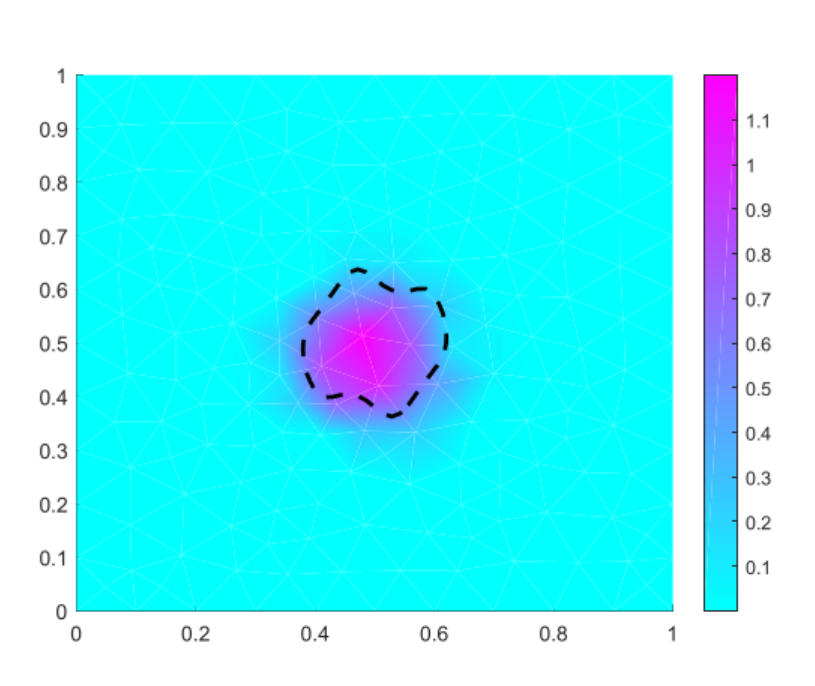}}
\subfloat{\includegraphics[scale=0.4]{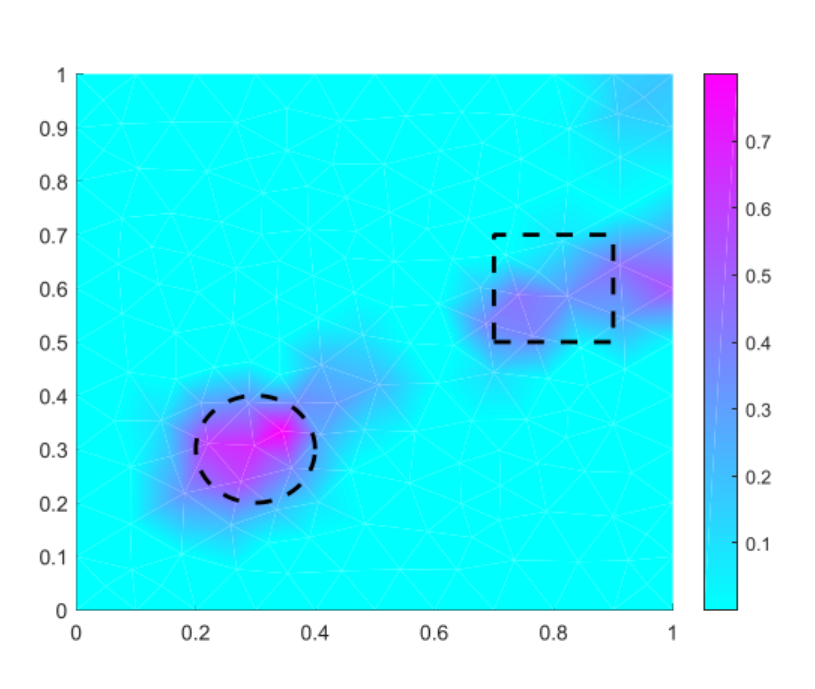}}\\
\subfloat{\includegraphics[scale=0.4]{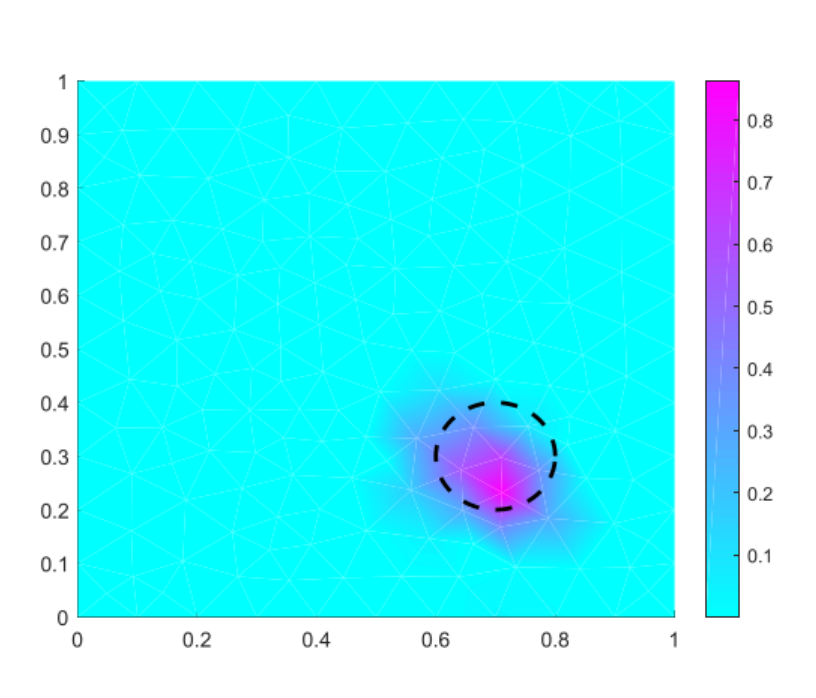}}
\subfloat{\includegraphics[scale=0.4]{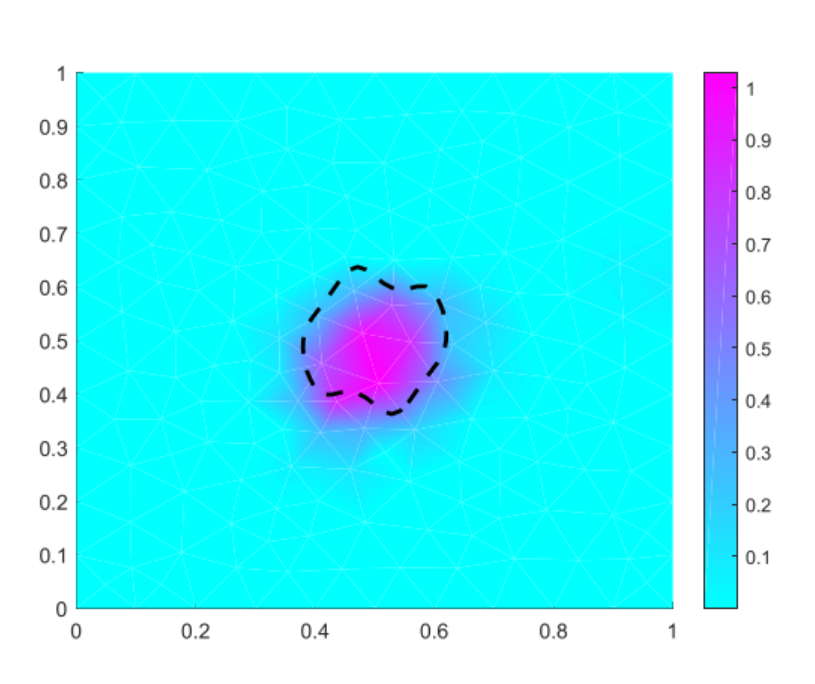}}
\subfloat{\includegraphics[scale=0.4]{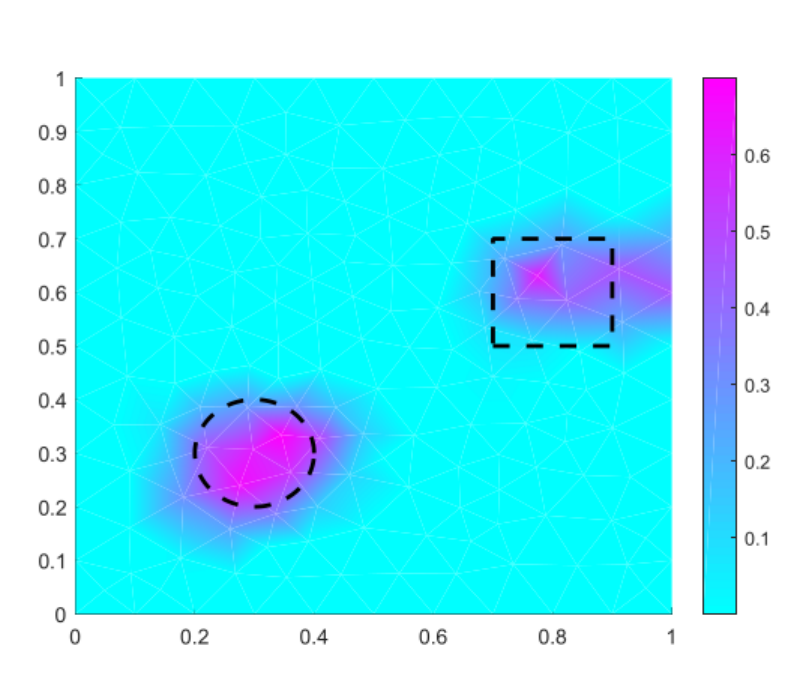}}
\caption{Reconstruction of the shape $D$ using combined monotonicity and  TSVD regularization. The first row presents the reconstruction from noise-free data, while the second row shows the reconstruction from noisy data with a noise level of $\delta=0.1$ .}\label{combined_single_shape}
\end{center}
\end{figure}

\begin{figure}[H]
\begin{center}
\subfloat{\includegraphics[scale=0.4]{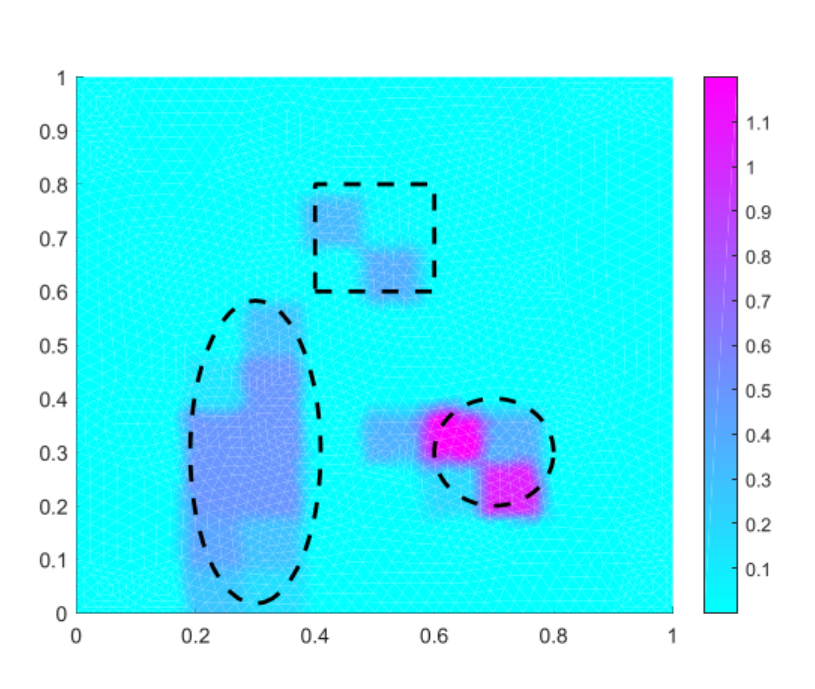}}
\subfloat{\includegraphics[scale=0.4]{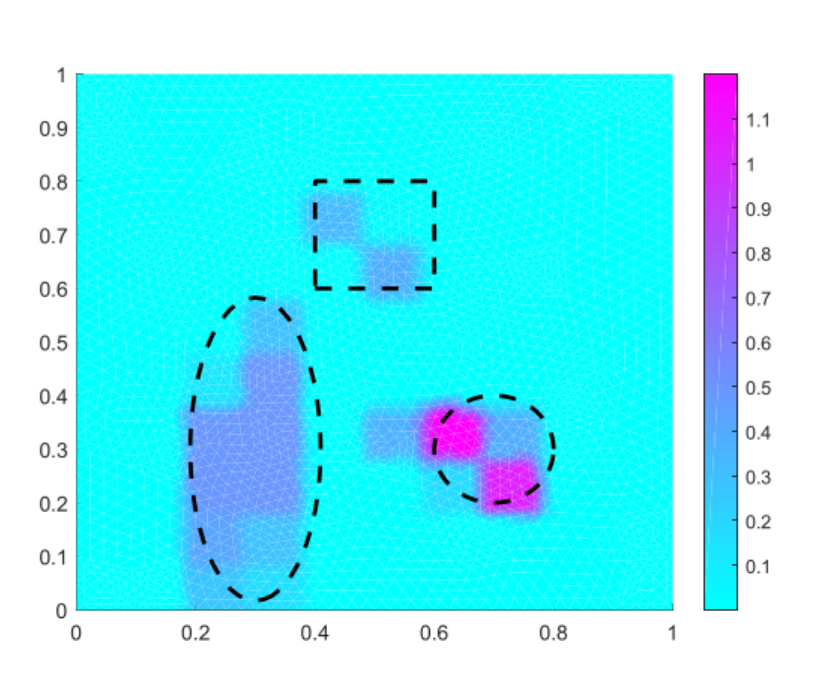}}\\
\subfloat{\includegraphics[scale=0.4]{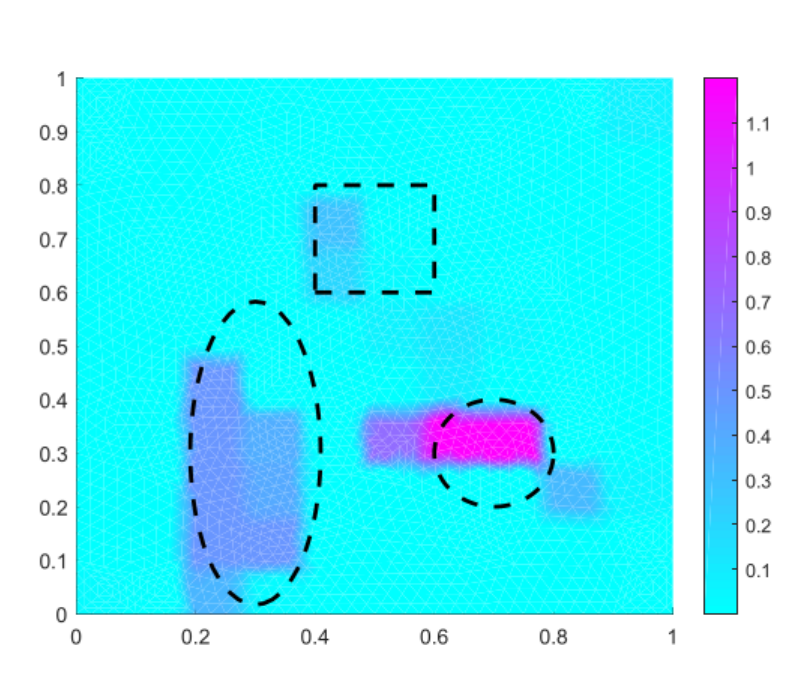}}
\subfloat{\includegraphics[scale=0.4]{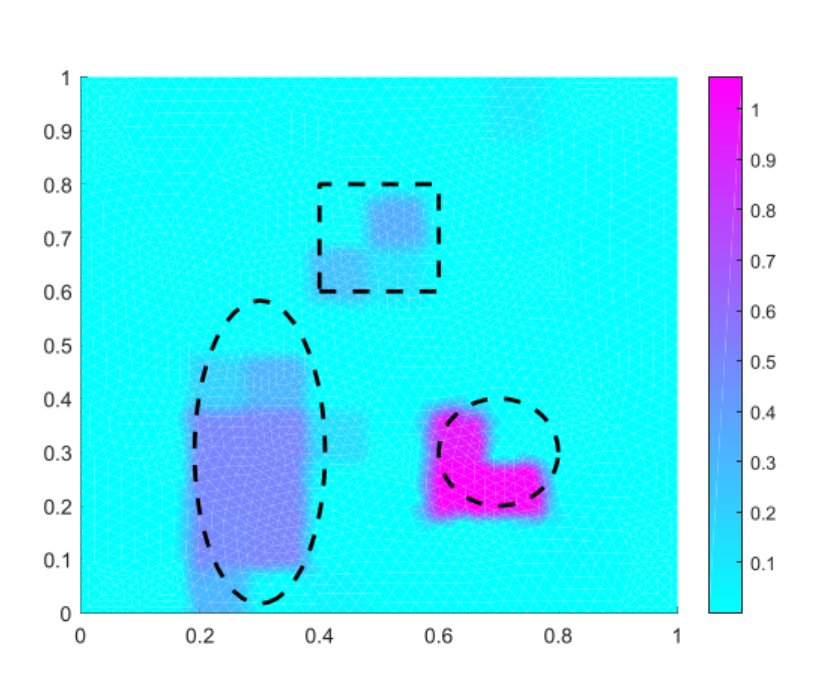}}
\caption{The first column displays, from top to bottom, the reconstructions of \( \mathrm{supp}\, \delta\lambda \), \( \mathrm{supp}\, \delta\mu \), and \( \mathrm{supp}\, \delta\rho \) using the monotonicity constraints, obtained from both noise-free data and noisy data with a noise level of \( \delta = 0.1 \). The second column displays, from top to bottom, the reconstructions of \( \mathrm{supp}\, \delta\lambda \), \( \mathrm{supp}\, \delta\mu \), and \( \mathrm{supp}\, \delta\rho \) using the combined monotonicity and TSVD method, also obtained from noise-free and noisy data with a noise level of \( \delta = 0.1 \).}\label{combined_multiple_shape}
\end{center}
\end{figure}
\section{Conclusion}
In this paper, we developed a numerical method that combines the monotonicity approach with the truncated singular value decomposition (TSVD) to address a geometric inverse problem.

We began by applying the linearized monotonicity test in the case where the parameter variations have a common  support. While effective, this method is sensitive to small amounts of noise. To improve the resolution, we combined the regularized monotonicity method with the TSVD technique. This hybrid approach proved particularly effective when the parameter variations have disjoint supports.

In future work, we aim to integrate the level set method with this approach in order to further refine the resolution and develop a globally convergent reconstruction method.
\section{Appendix}
\subsection{An abstract differentiability result}\label{app1}
In this section we give an abstract result for differentiating Lagrangian functionals with respect to a parameter. This result is used to prove Proposition \ref{prop_diff}. We first introduce some notations.  Consider the functional
\begin{equation}
G: [0,\varepsilon]\times X\times Y\rightarrow \mathbb{R}
\end{equation}
for some $\varepsilon>0$ and the Banach spaces $X$ and $Y$. For each
$t\in [0,\varepsilon]$,  define
\begin{equation}\label{s2}
g(t)=\adjustlimits\inf_{x\in X}\sup_{y\in Y}G(t,x,y),\qquad  h(t)=\adjustlimits \sup_{y\in Y}\inf_{x\in X}G(t,x,y),
\end{equation}
and the associated sets
\begin{align}
 X(t) &=\left\{ x^t\in X: \sup_{y\in Y}G(t,x^t,y)=g(t)\right\},\\
 Y(t) &=\left\{ y^t\in Y: \inf_{x\in X}G(t,x,y^t)=h(t)\right\}.
\end{align}
Note that inequality $h(t)\leq g(t)$ holds. If $h(t) = g(t)$ the set of saddle points is given by
\begin{equation}
S(t) := X(t)\times Y(t).
\end{equation}
We state now a simplified version of a result from \cite{MR948649} derived from \cite{CS} which gives realistic conditions that allows to differentiate $g(t)$ at $t=0$.  The main difficulty is to obtain conditions which allow to exchange the derivative with respect to $t$ and the inf-sup in \eqref{s2}.  
\begin{theorem}[Correa and Seeger\cite{CS,DZ2}]
\label{CS}
Let $X,Y,G$ and $\varepsilon$ be given as above. Assume that the following conditions hold :
\begin{itemize}
\item[(H1)] $S(t)\neq\emptyset$ for $0\leq t\leq \varepsilon$.
\item[(H2)] The partial derivative $\partial_t G(t,x,y)$ exists for all $(t,x,y)\in [0,\varepsilon]\times X\times Y $.
\item[(H3)] For any sequence $\{t_n\}_{n\in\mathbb{N}}$, with  $t_n\to 0$, there exist  a subsequence $\{t_{n_k}\}_{k\in\mathbb{N}}$ and $x^0\in X(0)$, $x_{n_k}\in X(t_{n_k})$ such that for all $y\in Y(0)$,
\begin{equation*}
\lim_{t\searrow 0, k\to\infty} \partial_t G(t,x_{n_k},y) = \partial_t G(0,x^0,y),
\end{equation*}
\item[(H4)] For any sequence $\{t_n\}_{n\in\mathbb{N}}$, with  $t_n\to 0$, there exist  a subsequence $\{t_{n_k}\}_{k\in\mathbb{N}}$ and $y^0\in Y(0)$, $y_{n_k}\in Y(t_{n_k})$ such that for all $x\in X(0)$,
\begin{equation*}
\lim_{t\searrow 0, k\to\infty} \partial_t G(t,x,y_{n_k}) = \partial_t G(0,x,y^0),
\end{equation*}
\end{itemize}
Then there exists $(x^0,y^0)\in X(0)\times Y(0)$  such that
\begin{align*}
\frac{dg}{dt}(0) &=  \partial_t G(0,x^0,y^0).
\end{align*}
\end{theorem}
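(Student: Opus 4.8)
The plan is to sandwich the difference quotient $\frac{g(t)-g(0)}{t}$ between two quantities that both tend to $\partial_t G(0,x^0,y^0)$ for a suitable saddle point $(x^0,y^0)\in S(0)$. I would obtain the two one-sided bounds from the inf-sup structure together with the optimality of elements of $X(t)$ and $Y(t)$, and convert finite differences of $G$ into $\partial_t G$ via the mean value theorem, which (H2) legitimizes since $t\mapsto G(t,x,y)$ is then differentiable throughout $[0,\varepsilon]$. Here (H1) is what guarantees $h(t)=g(t)$ and the existence, for each $t$, of the points $x^t\in X(t)$ and $y^t\in Y(t)$ used below.

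For the upper bound I would fix a comparison point $x\in X(0)$ and, for $t>0$, choose $y^t\in Y(t)$. Since $y^t$ realizes the inner infimum and $h(t)=g(t)$, one has $g(t)=\inf_{z}G(t,z,y^t)\le G(t,x,y^t)$, whereas $x\in X(0)$ gives $g(0)=\sup_{w}G(0,x,w)\ge G(0,x,y^t)$; subtracting and applying the mean value theorem yields $\frac{g(t)-g(0)}{t}\le \partial_t G(\xi_t,x,y^t)$ for some $\xi_t\in(0,t)$. Symmetrically, fixing $y\in Y(0)$ and choosing $x^t\in X(t)$, the bounds $g(t)\ge G(t,x^t,y)$ and $g(0)\le G(0,x^t,y)$ give the lower bound $\frac{g(t)-g(0)}{t}\ge \partial_t G(\eta_t,x^t,y)$ for some $\eta_t\in(0,t)$.

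Given a sequence $t_n\to 0^+$, I would first invoke (H4) to pass to a subsequence producing $y^0\in Y(0)$ and $y_{n_k}\in Y(t_{n_k})$, and then invoke (H3) along that subsequence to extract a further subsequence producing $x^0\in X(0)$ and $x_{n_k}\in X(t_{n_k})$. Inserting $y^t=y_{n_k}$ into the upper bound and $x^t=x_{n_k}$ into the lower bound, and letting the comparison points range over $X(0)$ and $Y(0)$, the double-limit conditions of (H3)--(H4) absorb $\xi_t,\eta_t\to0$ and produce
\[
\sup_{y\in Y(0)}\partial_t G(0,x^0,y)\le\liminf_{k}\frac{g(t_{n_k})-g(0)}{t_{n_k}}\le\limsup_{k}\frac{g(t_{n_k})-g(0)}{t_{n_k}}\le\inf_{x\in X(0)}\partial_t G(0,x,y^0).
\]
Because $\inf_{x}\partial_t G(0,x,y^0)\le\partial_t G(0,x^0,y^0)\le\sup_{y}\partial_t G(0,x^0,y)$, the whole chain collapses and the subsequential limit equals $\partial_t G(0,x^0,y^0)$.

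Finally I would upgrade this to the genuine derivative. Writing $v:=\partial_t G(0,x^0,y^0)$, the \textbf{hard part} is precisely to coordinate the two independent hypotheses onto one common subsequence so that $x^0$ and $y^0$ pair into a single saddle point, and then to show the limiting value is sequence-independent; the nested extraction handles the first point. For the second, if a different sequence yielded $(\tilde x^0,\tilde y^0)\in S(0)$ with value $\tilde v$, the min-max inequalities $v=\inf_{x}\partial_t G(0,x,y^0)\le\partial_t G(0,\tilde x^0,y^0)\le\sup_{y}\partial_t G(0,\tilde x^0,y)=\tilde v$ together with the symmetric chain force $v=\tilde v$. Hence every subsequence of the difference quotient has a further subsequence with the same limit, so $\frac{dg}{dt}(0)$ exists and equals $\partial_t G(0,x^0,y^0)$, completing the argument.
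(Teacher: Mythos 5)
The paper does not prove this theorem: it is quoted from the literature (Correa--Seeger, via Delfour--Zol\'esio) and used as a black box to justify Proposition~\ref{prop_diff}, so there is no in-paper argument to compare against. Your proposal is a correct, self-contained reconstruction of the classical proof: the two-sided bound on the difference quotient obtained by testing an element of $Y(t)$ against a fixed $x\in X(0)$ (and symmetrically an element of $X(t)$ against a fixed $y\in Y(0)$), the conversion of increments of $G$ into $\partial_t G$ via the mean value theorem licensed by (H2), the nested extraction that puts the $x^0$ of (H3) and the $y^0$ of (H4) along one common subsequence, the collapse of the chain
\[
\sup_{y\in Y(0)}\partial_t G(0,x^0,y)\le\liminf_k\frac{g(t_{n_k})-g(0)}{t_{n_k}}\le\limsup_k\frac{g(t_{n_k})-g(0)}{t_{n_k}}\le\inf_{x\in X(0)}\partial_t G(0,x,y^0)\le\partial_t G(0,x^0,y^0),
\]
and the min--max argument showing the subsequential limit does not depend on the sequence, which upgrades the statement to an honest (one-sided) derivative at $t=0$. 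Two reading conventions you use should be made explicit, though neither is a gap: (H1) must be understood as asserting genuine saddle points, hence $g(t)=h(t)$ on $[0,\varepsilon]$ (the paper only defines $S(t)$ under that equality, and your upper and lower bounds both need it); and the double limits in (H3)--(H4) must be read as joint limits, so that they apply to the diagonal sequences $\partial_t G(\xi_{t_{n_k}},x,y_{n_k})$ with $\xi_{t_{n_k}}\in(0,t_{n_k})$ produced by the mean value theorem. Both are the intended readings in the cited sources.
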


\bibliographystyle{plain}
\bibliography{biblio}

\end{document}